\documentclass[11pt]{article}

\usepackage[a4paper,margin=1.1in]{geometry}
\usepackage{amsmath,amssymb,amsthm,mathtools}
\usepackage{hyperref}
\hypersetup{hidelinks}
\usepackage{enumitem}
\usepackage{graphicx}
\graphicspath{{figures/}}

\usepackage[style=numeric,sorting=none,doi=true,url=false,isbn=true,eprint=false,backend=biber]{biblatex}
\newtheorem{theorem}{Theorem}[section]
\newtheorem{proposition}[theorem]{Proposition}
\newtheorem{lemma}[theorem]{Lemma}
\newtheorem{corollary}[theorem]{Corollary}

\theoremstyle{definition}
\newtheorem{definition}[theorem]{Definition}
\newtheorem{remark}[theorem]{Remark}

\newcommand{\C}{\mathbb C}
\newcommand{\R}{\mathbb R}
\newcommand{\dd}{\,d}
\newcommand{\Ree}{\operatorname{Re}}
\newcommand{\Imm}{\operatorname{Im}}
\newcommand{\Res}{\operatorname*{Res}}

\title{A Gaussian–Perron Prime-Side Defect and Local Profiles Near Critical-Line Zeros of the Riemann Zeta Function}
\author{Netzer Moriya}
\date{}
                
\begin{document}
\maketitle
                
\noindent
siOnet -- Applied Modeling Research, Edison, NJ, USA\\
Corresponding author: \texttt{netzer@si-o-net.com}

\begin{abstract}
We introduce a Gaussian--Perron prime-force defect that compares a smoothed
prime-side logarithmic force with the logarithmic derivative of the Riemann
zeta function. The construction turns the explicit formula into a local
diagnostic for zero geometry. Its kernel produces an error-function prime
weight and an anisotropic zero-side damping law, with an explicit boundary
separating amplified and suppressed nonlocal zero contributions. We prove an
exact zero-side formula, derive a universal selected-zero profile on the
logarithmic scale, and formulate a finite-window damping certificate for
non-selected residues. Under explicit damping, pole, and contour-regularity
hypotheses, these ingredients localize the full defect near a selected zero.
Assuming the Riemann Hypothesis and the stated pole-damping condition, the
full defect near each fixed simple critical-line zero has the selected-zero
profile up to an exponentially small nonlocal remainder. 
The framework provides a local diagnostic for zero geometry associated with
the Riemann zeta function.
\end{abstract}
\noindent\textbf{Keywords:} 
Riemann zeta function; explicit formula; Gaussian--Perron smoothing; prime-force defect; local zero profiles; anisotropic damping
\newline
\noindent\textbf{2020 Mathematics Subject Classification:} 
Primary 11M26; Secondary 11M06, 11N05.

\section{Introduction}
\label{sec:Introduction}

The completed Riemann \(\xi\)-function,
\begin{equation}
        \xi(s)=\frac12 s(s-1)\pi^{-s/2}\Gamma(s/2)\zeta(s),
\end{equation}
packages the functional equation and the non-trivial zeros of \(\zeta\). Its
logarithmic derivative has the horizontal-force interpretation
\begin{equation}
        \Ree\frac{\xi'}{\xi}(\sigma+it)
        =
        \partial_\sigma\log|\xi(\sigma+it)|.
\end{equation}
Thus the quantity
\(B(s)=\left(\sigma-\tfrac12\right)\Ree(\xi'/\xi)(s)\),
formally defined in Section~\ref{sec:Basic_Notation}, measures whether the
completed logarithmic force points away from the critical line.

This viewpoint is tied to horizontal monotonicity and positivity formulations
related to the Riemann Hypothesis. Lagarias \cite{Lagarias1999} studied a
positivity property of the Riemann \(\xi\)-function. Sondow--Dumitrescu
\cite{SondowDumitrescu} proved a monotonicity property of \(|\xi|\) on
horizontal half-lines and obtained a reformulation of RH.
Matiyasevich--Saidak--Zvengrowski
\cite{MatiyasevichSaidakZvengrowski} developed related monotonicity results
for \(\zeta\), \(L\)-functions, and associated functions. Singh
\cite{Singh2025} later studied a monotonicity property of Riemann's
\(\xi\)-function along certain curves.

More recently, Gold{\v{s}}tein--Grigutis \cite{GoldsteinGrigutis} and
Grigutis--Tur{\v{c}}inskas \cite{GrigutisTurcinskas2026} studied positivity of
\(\Ree(\xi'/\xi)\) in the right half of the critical strip and near the
critical line, including explicit zero-sum bounds and hypothetical off-line
configurations. The present paper develops a local theory: it builds a
Gaussian-smoothed prime-side defect and analyzes the zero geometry seen by
that defect near a selected zero.

The analytic starting point is
\begin{equation}
        \frac{\xi'}{\xi}(s)=A(s)+\frac{\zeta'}{\zeta}(s),
        \qquad
        A(s)=
        \frac1s+\frac1{s-1}
        -\frac12\log\pi
        +\frac12\psi(s/2),
        \label{eq:intro-xi-decomposition}
\end{equation}
together with
\begin{equation}
        \frac{\zeta'}{\zeta}(s)
        =
        -\sum_{n=1}^{\infty}\frac{\Lambda(n)}{n^s}
        \qquad
        (\Ree s>1).
\end{equation}
Thus \(\zeta'/\zeta\) is the prime-side component of the completed
logarithmic force. Contour shifting then relates prime sums to zero sums
through the classical explicit formula; see \cite{Titchmarsh,Edwards}.

\subsection{Objective}
\label{subsec:objective}

We construct a Gaussian--Perron smoothed prime field \(P_{X,\alpha}(s)\) and
study the defect
\begin{equation}
        \Delta_{X,\alpha}(s)
        =
        \left(\sigma-\frac12\right)
        \Ree\left(P_{X,\alpha}(s)-\frac{\zeta'}{\zeta}(s)\right).
\end{equation}
A single kernel controls both sides of the explicit formula. On the prime
side it gives an explicit error-function cutoff and rapidly decaying
prime-power tails. On the zero side it gives an anisotropic damping law that
separates a selected zero from the surrounding zero cloud.

The main results are:
\begin{enumerate}[label=\textup{(\roman*)}]
\item a Gaussian--Perron prime smoothing with an explicit error-function
prime weight;
\item an explicit zero-side formula for the corresponding prime-force defect;
\item an exact selected-zero profile on the logarithmic scale;
\item a universal bounded selected-zero profile at a critical-line zero and a
linear selected-zero response for a hypothetical off-line zero;
\item an explicit damping-region geometry and a finite-window certificate for
exponential suppression of non-selected zero contributions;
\item a conditional localization theorem under damping, pole, and
shifted-contour hypotheses;
\item an RH-conditional full local profile theorem near each fixed simple
critical-line zero satisfying the pole-damping condition.
\end{enumerate}

The analysis is local and defect-theoretic. It focuses on the selected-zero
profile and the nonlocal remainder generated by a Gaussian-smoothed
prime-side force, within the broader RH-level setting of horizontal
monotonicity and global positivity.

\subsection{Relation to prior work}
\label{subsec:relation-prior-work}

The paper combines four themes: logarithmic derivatives, explicit formulas,
smoothed prime sums, and horizontal monotonicity. The monotonicity literature
cited above studies the completed logarithmic derivative itself. In contrast,
the object here is a defect between a smoothed prime-side field and
\(\zeta'/\zeta\), weighted by horizontal distance from the critical line. The
results are therefore local asymptotic statements for this defect within the
RH-related horizontal monotonicity setting.

Smoothed prime sums and Dirichlet polynomials are standard tools in the
analysis of \(\zeta\). Soundararajan used prime-power sums in RH-conditional
moment bounds \cite{Soundararajan2009}, and
Radziwiłł--Soundararajan used short prime sums in their proof of Selberg's
central limit theorem for \(\log|\zeta(1/2+it)|\)
\cite{RadziwillSoundararajan2017}. These works provide the broader zeta
statistics context for smoothed prime-side approximations, while the present
paper studies the associated horizontal force defect.

A second related line separates prime and zero contributions through finite
Euler products and hybrid Euler--Hadamard formulas. Gonek \cite{Gonek}
studied finite Euler products and RH-level approximation phenomena.
Gonek--Hughes--Keating \cite{GonekHughesKeating} used a smoothed explicit
formula to approximate \(\zeta\) by a prime product multiplied by a product
over nearby zeros. Bui--Gonek--Milinovich \cite{BuiGonekMilinovich2015}
applied this framework to moments of \(\zeta'\) at zeros. The present work is
related in spirit but differs in object and scale: it studies a horizontal
logarithmic-force defect near one selected zero.

The contribution lies in the Gaussian--Perron prime-force defect
\(\Delta_{X,\alpha}\) and the logarithmic selected-zero scaling
\(s=\rho_0+\lambda/\log X\). The classical contour shift and smoothed
explicit formula supply the analytic mechanism. In this scale, one selected
zero produces an exact local residue profile, while the remaining zero cloud
is filtered by the anisotropic functional \(Q_\alpha\).

This differs from horizontal monotonicity criteria, which concern
\(\Re(\xi'/\xi)\), \(|\xi|\), or related completed zeta quantities directly.
It also differs from hybrid Euler--Hadamard formulas, which approximate
\(\zeta(s)\) by prime and zero factors. Here the object is a local defect
between a Gaussian-smoothed prime-side logarithmic force and
\(\zeta'/\zeta\).

Thus the smoothed prime-side force becomes a local probe of zero geometry. The
same Gaussian--Perron smoothing defines a computable prime-power field and,
after contour shifting, resolves zero contributions through an anisotropic
zero-side damping law. Section~\ref{sec:computational-diagnostics} gives a
direct prime-side numerical check of the selected-zero local profile.

\subsection{Notation and Standing Conventions}
\label{sec:Basic_Notation}

Throughout, \(s=\sigma+it\). The von Mangoldt function is \(\Lambda(n)\).
Non-trivial zeros of \(\zeta\) are denoted by \(\rho\), counted with
multiplicity \(m_\rho\). We write
\begin{equation}
        \psi(z)=\frac{\Gamma'(z)}{\Gamma(z)}
\end{equation}
for the digamma function and
\begin{equation}
        \operatorname{erfc}(x)
        =
        \frac{2}{\sqrt{\pi}}\int_x^\infty e^{-u^2}\,\dd u
\end{equation}
for the complementary error function.

We use
\begin{equation}
        L(s)=\frac{\xi'}{\xi}(s),
        \qquad
        L(s)=A(s)+\frac{\zeta'}{\zeta}(s),
        \label{eq:xi-log-decomposition}
\end{equation}
with \(A(s)\) as in Eq.~\eqref{eq:intro-xi-decomposition}. This follows by
taking the logarithmic derivative of the completed \(\xi\)-function, since
\begin{equation}
        \frac{\dd}{\dd s}\log\Gamma(s/2)=\frac12\psi(s/2).
\end{equation}
The decomposition is standard; see \cite{Edwards,Titchmarsh}.

\begin{definition}[Horizontal $\xi$-force]
At a point where \(\xi(s)\neq0\), we define
\begin{equation}
        B(s)=
        \left(\sigma-\frac12\right)
        \Ree\frac{\xi'}{\xi}(s).
\end{equation}
\end{definition}

\begin{remark}
Since
\begin{equation}
        \Ree\frac{\xi'}{\xi}(s)=\partial_\sigma\log|\xi(s)|,
\end{equation}
the sign of \(B(s)\) records the horizontal direction of the completed
logarithmic force. Electrostatic interpretations of logarithmic derivatives
associated with the zeta function have also appeared in the literature; see,
for example, \cite{LeClair}.
\end{remark}

\begin{remark}[Symmetry across the critical line]
\label{rem:symmetry}
The functional equation \(\xi(s)=\xi(1-s)\) gives \(L(1-s)=-L(s)\), and
\(L(\overline{s})=\overline{L(s)}\). Hence, for \(s=1/2+x+it\),
\begin{equation}
        \Ree L\left(\frac12-x+it\right)
        =
        -\Ree L\left(\frac12+x+it\right),
\end{equation}
so \(\Ree(\xi'/\xi)\) is odd across the critical line, while \(B(s)\) is even
under this reflection. The present contribution builds on this standard
symmetry through the prime-side defect \(\Delta_{X,\alpha}\), its
selected-zero profile, the conditional nonlocal localization theorem, and the
RH-conditional full local profile theorem.
\end{remark}

\section{A Gaussian--Perron Prime Smoothing}
\label{sec:A_Gaussian_Perron_Prime_Smoothing}

Let
\begin{equation}
        X>1,
        \qquad
        Y=\log X,
        \qquad
        \alpha>0.
\end{equation}
We use the Gaussian--Perron kernel
\begin{equation}
        H_{X,\alpha}(z)=X^z\exp\!\left(\alpha^2Yz^2\right).
        \label{eq:kernel}
\end{equation}
The positive quadratic sign is essential: for \(z=c+iv\),
\begin{equation}
        \left|\exp\!\left(\alpha^2Yz^2\right)\right|
        =
        \exp\!\left(\alpha^2Y(c^2-v^2)\right),
\end{equation}
which decays rapidly on vertical lines, whereas \(\exp(-\alpha^2Yz^2)\)
would grow as \(|v|\to\infty\).

\begin{definition}[Gaussian--Perron smoothed prime field]
Let \(s=\sigma+it\), and choose \(c>\max\{0,1-\sigma\}\). Define
\begin{equation}
        P_{X,\alpha}(s)
        =
        \frac{1}{2\pi i}
        \int_{(c)}
        \frac{\zeta'}{\zeta}(s+z)
        \frac{X^z\exp(\alpha^2Yz^2)}{z}\,\dd z.
        \label{eq:PXalpha-definition}
\end{equation}
\end{definition}

\begin{proposition}[Prime-side representation]
For any \(c>\max\{0,1-\sigma\}\),
\begin{equation}
        P_{X,\alpha}(s)
        =
        -\sum_{n=1}^{\infty}\frac{\Lambda(n)}{n^s}W_{X,\alpha}(n),
\end{equation}
where
\begin{equation}
        W_{X,\alpha}(n)
        =
        \frac{1}{2\pi i}
        \int_{(c)}
        \frac{X^z\exp(\alpha^2Yz^2)}{z}n^{-z}\,\dd z.
\end{equation}
Moreover,
\begin{equation}
        W_{X,\alpha}(n)
        =
        \frac12\operatorname{erfc}
        \left(
        \frac{\log n-\log X}{2\alpha\sqrt{Y}}
        \right).
        \label{eq:erfc-weight}
\end{equation}
Thus \(W_{X,\alpha}\) is a Gaussian-smoothed Perron cutoff centered at
\(n=X\), with logarithmic width \(\sqrt{\log X}\). Such weights are related
in spirit to weighted finite Euler products and hybrid Euler--Hadamard
factorizations \cite{Gonek,GonekHughesKeating}; Eq.~\eqref{eq:erfc-weight} is the
Perron-weight form used here.
\end{proposition}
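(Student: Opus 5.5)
The proof has two parts: interchanging sum and integral, then evaluating the single-integer Perron integral in closed form.

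\emph{Step 1: interchange.} Since \(c>1-\sigma\), every \(z\) on the line \(\Ree z=c\) satisfies \(\Ree(s+z)=\sigma+c>1\). On that line the Dirichlet series \(\frac{\zeta'}{\zeta}(s+z)=-\sum_n\Lambda(n)n^{-s-z}\) converges absolutely, and it is bounded by \(\sum_n\Lambda(n)n^{-\sigma-c}<\infty\) uniformly in \(\Imm z\). Writing \(z=c+iv\), the kernel satisfies
\[
\left|X^z e^{\alpha^2Yz^2}/z\right|\le X^c e^{\alpha^2Y(c^2-v^2)}/c,
\]
which is integrable in \(v\). The double sum–integral therefore converges absolutely, and Fubini gives the series representation with \(W_{X,\alpha}(n)\) defined by the stated integral. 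Since \(c>0\), no pole at \(z=0\) lies on the line.

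\emph{Step 2: evaluate \(W\).} Set \(u=\log X-\log n\), so that
\[
W_{X,\alpha}(n)=\frac{1}{2\pi i}\int_{(c)} e^{uz+\beta z^2}\frac{\dd z}{z},\qquad \beta=\alpha^2Y>0.
\]
I will not compute this integral directly. Instead I regard \(W\) as a function \(F(u)\) and differentiate under the integral sign, which is justified by the Gaussian decay of the integrand. This gives
\[
F'(u)=\frac{1}{2\pi i}\int_{(c)} e^{uz+\beta z^2}\,\dd z.
\]
This integrand is entire, so the contour can be moved to \(c=0\). With \(z=iv\),
\[
F'(u)=\frac{1}{2\pi}\int_{\R} e^{iuv-\beta v^2}\,\dd v=\frac{1}{\sqrt{4\pi\beta}}\,e^{-u^2/(4\beta)},
\]
which is a Gaussian density with variance \(2\beta\).

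\emph{Step 3: boundary value.} To fix the constant of integration I need \(F(u)\to0\) as \(u\to-\infty\). For \(u<0\) I shift the contour to \(\Ree z=c'\to+\infty\); no pole is crossed because \(c'>0\). The bound \(|e^{uz+\beta z^2}/z|\le e^{uc'+\beta c'^2}e^{-\beta v^2}/c'\) does not decay as \(c'\to\infty\) for fixed \(u\), so I instead take a fixed \(c'\) and let \(u\to-\infty\). Then \(e^{uc'}\to0\), and dominated convergence gives \(F(u)\to0\). The tails of the shifted horizontal segments vanish because of the factor \(e^{-\beta v^2}\).

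\emph{Step 4: conclude.} Integrating from \(-\infty\),
\[
F(u)=\int_{-\infty}^u \frac{e^{-w^2/(4\beta)}}{\sqrt{4\pi\beta}}\,\dd w=\frac12\operatorname{erfc}\!\left(\frac{-u}{2\sqrt\beta}\right).
\]
Since \(-u=\log n-\log X\) and \(2\sqrt\beta=2\alpha\sqrt Y\), this is exactly \eqref{eq:erfc-weight}.

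The delicate points are the justifications rather than the algebra: differentiating under the integral, shifting contours with vanishing horizontal contributions, and the limit \(u\to-\infty\). All three rest on the uniform Gaussian decay \(e^{-\beta v^2}\) in vertical strips. As a cross-check, moving the original contour to the left past \(z=0\) picks up the residue \(1\), giving \(F\to1\) as \(u\to+\infty\), consistent with \(\operatorname{erfc}(-\infty)=2\).
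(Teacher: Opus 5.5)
Your proof is correct and follows essentially the same route as the paper: Fubini via the Gaussian majorant on \(\Ree z=c\), differentiation under the integral in the logarithmic variable to reduce to a Gaussian integral of an entire function, the boundary value from the factor \(e^{uc}\) on the original line (your detour through a second line \(c'\) is unnecessary), and integration of the resulting Gaussian density. The differences are cosmetic. You use \(u=\log X-\log n\) and integrate from \(-\infty\), where the paper uses \(u=\log n-\log X\) and integrates to \(+\infty\). You also shift to \(\Ree z=0\) and evaluate a Fourier transform, where the paper completes the square and shifts to the line \(\Ree z=u/(2\alpha^2Y)\).
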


\begin{proof}
On \(z=c+iv\), one has \(\Ree(s+z)>1\), so
\begin{equation}
        \frac{\zeta'}{\zeta}(s+z)
        =
        -\sum_{n=1}^{\infty}\Lambda(n)n^{-s-z}.
\end{equation}
The interchange with the integral in Eq.~\eqref{eq:PXalpha-definition} is
justified by
\begin{equation}
        \left|
        \frac{X^z\exp(\alpha^2Yz^2)}{z}
        \right|
        =
        \frac{X^c\exp(\alpha^2Yc^2)}{|c+iv|}
        \exp(-\alpha^2Yv^2)
\end{equation}
and by the convergence of
\begin{equation}
        \sum_{n=1}^{\infty}\Lambda(n)n^{-\sigma-c}
        \int_{-\infty}^{\infty}
        \frac{X^c\exp(\alpha^2Yc^2)}{|c+iv|}
        \exp(-\alpha^2Yv^2)\,\dd v,
\end{equation}
which follows from \(\sigma+c>1\) and \(c>0\). Hence the stated series
representation follows.

It remains to compute the weight. Put \(u=\log n-\log X\) and
\begin{equation}
        \Phi(u)=
        \frac{1}{2\pi i}
        \int_{(c)}
        \frac{\exp(\alpha^2Yz^2-uz)}{z}\,\dd z.
\end{equation}
Differentiation under the integral sign is justified locally uniformly in
\(u\) by Gaussian domination, giving
\begin{equation}
        \Phi'(u)
        =
        -\frac{1}{2\pi i}
        \int_{(c)}
        \exp(\alpha^2Yz^2-uz)\,\dd z.
\end{equation}
Since
\begin{equation}
        \alpha^2Yz^2-uz
        =
        \alpha^2Y
        \left(z-\frac{u}{2\alpha^2Y}\right)^2
        -
        \frac{u^2}{4\alpha^2Y},
\end{equation}
the entire integrand may be shifted to \(\Ree z=u/(2\alpha^2Y)\); the
horizontal sides vanish because, on \(z=r+iT\),
\begin{equation}
        \left|\exp(\alpha^2Yz^2-uz)\right|
        =
        \exp\left(\alpha^2Y(r^2-T^2)-ur\right).
\end{equation}
Thus
\begin{equation}
\begin{aligned}
        \frac{1}{2\pi i}
        \int_{(c)}
        \exp(\alpha^2Yz^2-uz)\,\dd z
        &=
        \frac{e^{-u^2/(4\alpha^2Y)}}{2\pi}
        \int_{-\infty}^{\infty}e^{-\alpha^2Yv^2}\,\dd v        \\
        &=
        \frac{1}{2\sqrt{\pi}\alpha\sqrt Y}
        \exp\!\left(-\frac{u^2}{4\alpha^2Y}\right),
\end{aligned}
\end{equation}
and therefore
\begin{equation}
        \Phi'(u)
        =
        -\frac{1}{2\sqrt{\pi}\alpha\sqrt Y}
        \exp\!\left(-\frac{u^2}{4\alpha^2Y}\right).
\end{equation}
Since \(c>0\), the original integral gives
\begin{equation}
        |\Phi(u)|
        \le
        C_{c,\alpha,Y}e^{-cu},
        \qquad u\to+\infty.
\end{equation}
Thus \(\Phi(u)\to0\) as \(u\to+\infty\), and integration of \(\Phi'\) from
\(u\) to \(+\infty\) yields
\begin{equation}
        \Phi(u)
        =
        \frac12\operatorname{erfc}
        \left(\frac{u}{2\alpha\sqrt Y}\right).
\end{equation}
Substituting \(u=\log n-\log X\) proves Eq.~\eqref{eq:erfc-weight}.
\end{proof}

We shall use the following definition throughout the sequel.

\begin{definition}[Prime-force defect]
At points where \(\zeta'/\zeta\) is finite, equivalently where \(s\) is
neither a zero nor the pole of \(\zeta\), define
\begin{equation}
        \Delta_{X,\alpha}(s)
        =
        \left(\sigma-\frac12\right)
        \Ree\left(P_{X,\alpha}(s)-\frac{\zeta'}{\zeta}(s)\right).
        \label{eq:defect-definition}
\end{equation}
\end{definition}

\section{Explicit Formula for the Defect}

\begin{lemma}[Logarithmic derivative away from zeros]
\label{lem:log-derivative-away}
Let \(A<B\) be fixed. Suppose \(w=u+it\), \(A\le u\le B\), is separated
from the zeros and pole of \(\zeta\) by distance at least
\(c_0/\log(3+|t|)\), where \(c_0>0\) is fixed. Then
\begin{equation}
        \frac{\zeta'}{\zeta}(w)
        \ll_{A,B,c_0}\log^2(3+|t|).
        \label{eq:log-derivative-away}
\end{equation}
\end{lemma}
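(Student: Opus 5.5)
Our plan is to use the classical partial-fraction (local Hadamard) expansion of \(\zeta'/\zeta\) near height \(t\); see \cite{Titchmarsh}. For \(-1\le u\le 2\) and \(|t|\ge 2\), say, one has
\begin{equation}
        \frac{\zeta'}{\zeta}(w)
        =
        \sum_{|\gamma-t|\le 1}\frac{m_\rho}{w-\rho}
        +O(\log(3+|t|)).
        \label{eq:local-hadamard-plan}
\end{equation}
This is derived from the Hadamard product together with the bound \(N(T+1)-N(T)\ll\log T\). For general fixed \(A\le u\le B\), we first reduce to this strip. If \(u\ge 2\), the Dirichlet series bound \(|\zeta'/\zeta(w)|\le|\zeta'/\zeta(2)|\) suffices. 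If \(u\le -1\), we use the functional equation in the form \(\zeta'/\zeta(w)=-\zeta'/\zeta(1-w)+(\text{gamma and trigonometric terms})\). The gamma and trigonometric terms are \(O_{A,B}(\log(3+|t|))\) away from the trivial zeros and poles; this is guaranteed by the separation hypothesis, which also keeps \(w\) a fixed distance \(\gg c_0/\log 3\) away from the trivial zeros \(-2k\) when \(|t|\) is bounded. The remaining term \(\zeta'/\zeta(1-w)\) has real part \(1-u\ge 2\) and is bounded. The bounded range \(|t|\le 2\) with \(-1\le u\le 2\) is a compact set. There \(\zeta'/\zeta\) is meromorphic with finitely many poles, and the distance condition bounds it by \(O_{c_0}(1)\), since \(|\zeta'/\zeta(w)|\ll 1/\operatorname{dist}\) near each simple pole or zero, with multiplicities bounded on the compact set.

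The main step is to bound the finite sum in \eqref{eq:local-hadamard-plan}. By hypothesis, each term satisfies \(|w-\rho|\ge c_0/\log(3+|t|)\), so
\begin{equation}
        \Bigl|\frac{m_\rho}{w-\rho}\Bigr|\le \frac{m_\rho\log(3+|t|)}{c_0}.
\end{equation}
The number of zeros counted with multiplicity in \(|\gamma-t|\le1\) is \(O(\log(3+|t|))\), by the Riemann--von Mangoldt formula. Multiplying these gives \(O(c_0^{-1}\log^2(3+|t|))\), and adding the \(O(\log)\) error yields \eqref{eq:log-derivative-away}. The pole at \(s=1\) matters only for bounded \(t\), which is already covered by the compact case.

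The step needing the most care is justifying \eqref{eq:local-hadamard-plan} uniformly for \(u\) slightly left of \(0\), in particular for \(-1\le u\le 0\), rather than only in \(-1\le u\le 2\) as is often stated. We would follow the standard derivation: subtract the expansion of \(\zeta'/\zeta\) at \(2+it\), which is \(O(1)\), from the expansion at \(w\). The digamma term changes by \(O(\log(3+|t|))\) uniformly for bounded \(u\) away from the trivial zeros. The zeros with \(|\gamma-t|>1\) contribute \(\sum |\gamma-t|^{-2}\ll\log(3+|t|)\). This argument is uniform on any fixed vertical strip, so it would in fact allow us to skip the functional-equation reduction altogether for \(|t|\ge 2\).
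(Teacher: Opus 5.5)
Your proposal is correct and is the same argument the paper cites: the local partial-fraction expansion $\zeta'/\zeta(w)=\sum_{|\gamma-t|\le1}m_\rho/(w-\rho)+O(\log(3+|t|))$, combined with the Riemann--von Mangoldt bound of $O(\log(3+|t|))$ zeros in the window, each zero contributing at most $\log(3+|t|)/c_0$; your reductions for $u\ge2$, $u\le-1$ and bounded $t$ fill in details the paper leaves to the reference. The worry in your last paragraph is unnecessary, because $-1\le u\le0$ already lies inside the range $-1\le u\le2$ where the standard expansion is stated.
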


\begin{proof}
This is the standard logarithmic-derivative estimate for \(\zeta\) away from
its zeros and pole, obtained from the partial-fraction expansion for
\(\zeta'/\zeta\) together with the Riemann--von Mangoldt zero-counting
estimate. We apply it in fixed vertical strips at points satisfying the
displayed separation condition; see \cite{Titchmarsh,MontgomeryVaughan}.
\end{proof}

\begin{lemma}[Admissible horizontal heights]
\label{lem:admissible-heights}
Let \(A<B\) and \(\tau\in\R\) be fixed. There exists a sequence
\(T_j\to\infty\) such that the horizontal segments
\begin{equation}
        \{u+i(\tau\pm T_j):A\le u\le B\}
\end{equation}
avoid the zeros and pole of \(\zeta\), and
\begin{equation}
        \frac{\zeta'}{\zeta}(u+i(\tau\pm T_j))
        \ll_{A,B,\tau}(\log T_j)^2
\end{equation}
uniformly for \(A\le u\le B\). The sequence may be chosen so that these
segments remain at distance \(\gg1/\log T_j\) from the ordinates of the
non-trivial zeros.
\end{lemma}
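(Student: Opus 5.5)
The plan is a pigeonhole argument on the ordinates of the zeros, followed by the partial-fraction estimate for \(\zeta'/\zeta\) in the standard Titchmarsh form. Fix \(A<B\) and \(\tau\). For large \(T\), the Riemann--von Mangoldt formula gives \(N(T+1)-N(T)\ll\log T\). Hence the unit interval \([T,T+1]\) contains \(O(\log T)\) ordinates \(\gamma\), counted with multiplicity. Removing around each of them an interval of length \(c_1/\log T\), with \(c_1\) a small absolute constant, leaves total measure at least \(1/2\). So there is a height \(T'\in[T,T+1]\) with \(|T'-\gamma|\ge c_2/\log T\) for every zero \(\rho=\beta+i\gamma\).

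Two heights have to be handled at once, namely \(\tau+T_j\) and \(\tau-T_j\). I will run the same exclusion argument with respect to the variable \(T\) for both. I exclude the \(T\) within \(c_1/\log T\) of some \(\gamma-\tau\), and also those within \(c_1/\log T\) of some \(-(\gamma-\tau)\). By the symmetry \(\rho\mapsto\bar\rho\) of the zeros, the second set is just the set of \(\gamma+\tau\). Both excluded sets have measure at most \(1/4\) in \([T,T+1]\) once \(c_1\) is small. Choosing \(T_j\) in the complement for \(T=j\), \(j\) large, gives \(T_j\to\infty\) with both horizontal lines at distance \(\gg1/\log T_j\) from all ordinates. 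In particular the segments avoid every nontrivial zero. For \(j\) large they also avoid the pole at \(s=1\) and the trivial zeros, which lie on the real axis.

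The bound then comes from the partial-fraction formula, valid uniformly for \(-1\le u\le 2\) and \(|t|\ge2\):
\[
\frac{\zeta'}{\zeta}(u+it)=\sum_{|\gamma-t|\le1}\frac{1}{u+it-\rho}+O(\log|t|).
\]
This can be derived from the Hadamard product, or taken as stated in \cite{Titchmarsh}. At \(t=\tau\pm T_j\) each term has modulus at most \(\log T_j/c_2\), up to the harmless shift between \(\log|t|\) and \(\log T_j\). There are \(O(\log T_j)\) terms, so the sum is \(\ll(\log T_j)^2\). This is consistent with Lemma~\ref{lem:log-derivative-away}, which could be invoked directly here since the separation condition now holds.

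It remains to extend from \([-1,2]\) to a general strip \([A,B]\). For \(u\ge2\), \(\zeta'/\zeta\) is bounded by its Dirichlet series. For \(u\le-1\), use the functional equation in the form \(L(1-s)=-L(s)\) of Remark~\ref{rem:symmetry} together with \eqref{eq:xi-log-decomposition}. This gives \(\frac{\zeta'}{\zeta}(s)=-\frac{\zeta'}{\zeta}(1-s)-A(s)-A(1-s)\). Here \(A\) is \(O(\log|t|)\) by Stirling for \(\psi\), uniformly in bounded \(u\) away from the real axis, and \(1-s\) lies in the half-plane \(\Re\ge2\). Making the strip statement uniform is the only real bookkeeping. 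I expect the main obstacle to be the simultaneous choice for the two heights \(\tau\pm T_j\), which the exclusion argument above handles.
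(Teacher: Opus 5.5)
Your proposal is correct and follows the same route as the paper: a Riemann--von Mangoldt count of ordinates in unit intervals to select heights \(\tau\pm T_j\) at distance \(\gg1/\log T_j\) from all zero ordinates, followed by the standard logarithmic-derivative bound of Lemma~\ref{lem:log-derivative-away}. Your version spells out what the paper leaves implicit, and those details are sound: the simultaneous exclusion for both heights, the partial-fraction estimate giving \((\log T_j)^2\), and the extension to an arbitrary strip \([A,B]\) via the Dirichlet series and the functional equation.
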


\begin{proof}
By the Riemann--von Mangoldt formula, the number of zero ordinates in
\([T,T+1]\) is \(O(\log T)\). Hence one can choose \(T_j\to\infty\) so that
the heights \(\tau\pm T_j\) are at distance \(\gg1/\log T_j\) from all zero
ordinates in the relevant bounded horizontal strip, and also avoid the pole.
Applying Lemma~\ref{lem:log-derivative-away} on the two horizontal segments
gives the stated uniform bound.
\end{proof}

We can now state the explicit prime-force defect formula.

\begin{theorem}[Explicit prime-force defect formula]
\label{thm:explicit-defect}
Let \(s\) be neither a zero nor the pole \(s=1\) of \(\zeta\). Choose
\(c>0\) with \(\Ree(s+c)>1\), and choose \(d>0\) such that no pole of the
integrand lies on \(\Ree z=-d\). (On \(\Ree z=c\) the integrand has no poles,
since \(\Ree(s+z)>1\) there and \(z\neq0\).) Then
\begin{equation}
\begin{aligned}
        P_{X,\alpha}(s)-\frac{\zeta'}{\zeta}(s)
        &=
        \sum_{\rho:\,-d<\Ree(\rho-s)<c}
        \frac{m_\rho X^{\rho-s}\exp(\alpha^2Y(\rho-s)^2)}{\rho-s}        \\
        &\quad
        -
        \mathbf 1_{\{-d<\Ree(1-s)<c\}}
        \frac{X^{1-s}\exp(\alpha^2Y(1-s)^2)}{1-s}                    \\
        &\quad
        +
        \sum_{m\ge1:\,-d<\Ree(-2m-s)<c}
        \frac{X^{-2m-s}\exp(\alpha^2Y(-2m-s)^2)}{-2m-s}              \\
        &\quad+
        R_{X,\alpha,d}(s),
\end{aligned}
        \label{eq:explicit-field-defect}
\end{equation}
where
\begin{equation}
        R_{X,\alpha,d}(s)
        =
        \lim_{j\to\infty}
        \frac{1}{2\pi i}
        \int_{-d-iT_j}^{-d+iT_j}
        \frac{\zeta'}{\zeta}(s+z)
        \frac{X^z\exp(\alpha^2Yz^2)}{z}\,\dd z.
        \label{eq:shifted-contour-limit}
\end{equation}
Here \(T_j\) is an admissible-height sequence from
Lemma~\ref{lem:admissible-heights}, applied to the crossed strip. If the
shifted-line integral is absolutely convergent, this limit is the ordinary
vertical integral over \(\Ree z=-d\). For the explicit formula, \(d>0\) is
otherwise arbitrary, apart from avoiding poles on the shifted contour; later
localization estimates impose additional restrictions such as
\(d<1/\alpha^2\).

Consequently,
\begin{equation}
        \Delta_{X,\alpha}(s)
        =
        \left(\sigma-\frac12\right)
        \Ree
        \left[
        \mathcal Z_{X,\alpha}(s)
        +
        \mathcal P_{X,\alpha}(s)
        +
        \mathcal T_{X,\alpha}(s)
        +
        R_{X,\alpha,d}(s)
        \right],
        \label{eq:explicit-defect}
\end{equation}
where the three terms are the zero, pole, and trivial-zero terms displayed in
Eq.~\eqref{eq:explicit-field-defect}; explicitly,
\begin{equation}
        \mathcal Z_{X,\alpha}(s)
        =
        \sum_{\rho:\,-d<\Ree(\rho-s)<c}
        \frac{m_\rho X^{\rho-s}\exp(\alpha^2Y(\rho-s)^2)}{\rho-s},
\end{equation}
\begin{equation}
        \mathcal P_{X,\alpha}(s)
        =
        -
        \mathbf 1_{\{-d<\Ree(1-s)<c\}}
        \frac{X^{1-s}\exp(\alpha^2Y(1-s)^2)}{1-s},
\end{equation}
and
\begin{equation}
        \mathcal T_{X,\alpha}(s)
        =
        \sum_{m\ge1:\,-d<\Ree(-2m-s)<c}
        \frac{X^{-2m-s}\exp(\alpha^2Y(-2m-s)^2)}{-2m-s}.
\end{equation}
\end{theorem}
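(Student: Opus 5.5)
We obtain the formula by a residue computation on a truncated rectangle, followed by a limit along admissible heights. Fix \(s\) and choose \(c,d\) as in the statement. Set
\[
F(z)=\frac{\zeta'}{\zeta}(s+z)\frac{X^z\exp(\alpha^2Yz^2)}{z}.
\]
For the admissible heights \(T_j\) of Lemma~\ref{lem:admissible-heights}, we take \(A=\sigma-d\), \(B=\sigma+c\) and \(\tau=t\), and integrate \(F\) over the positively oriented rectangle with vertices \(-d\pm iT_j\) and \(c\pm iT_j\). The heights then avoid the zeros and the pole, so no pole of \(F\) lies on the contour.

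By the residue theorem, \(\frac1{2\pi i}\int_{c-iT_j}^{c+iT_j}F\) equals \(\frac1{2\pi i}\int_{-d-iT_j}^{-d+iT_j}F\), plus the two horizontal contributions, plus the sum of the residues inside. The residues come from the following poles.
\begin{itemize}
\item At \(z=0\) the residue is \(\frac{\zeta'}{\zeta}(s)\), since \(s\) is not a zero or the pole, and the kernel equals \(1\) there.
\item At \(z=\rho-s\), \(\frac{\zeta'}{\zeta}\) has residue \(m_\rho\), giving \(m_\rho X^{\rho-s}e^{\alpha^2Y(\rho-s)^2}/(\rho-s)\). Here \(\rho\neq s\), so the denominator is nonzero.
\item At \(z=1-s\) the residue is \(-1\), giving the pole term.
\item At \(z=-2m-s\) the residue is \(+1\), giving the trivial-zero terms.
\end{itemize}
Only the poles with \(-d<\Ree z<c\) and \(|\Imm z|<T_j\) are enclosed. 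Moving \(\frac{\zeta'}{\zeta}(s)\) to the left-hand side gives the stated identity at finite height \(T_j\), with the residue sum truncated at height \(T_j\).

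Next we let \(j\to\infty\). The left vertical integral tends to \(P_{X,\alpha}(s)\), because the integral defining \(P_{X,\alpha}(s)\) converges absolutely on \(\Ree z=c\): \(\frac{\zeta'}{\zeta}\) is bounded there and the kernel carries the factor \(e^{-\alpha^2Yv^2}\).

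On each horizontal side \(z=r\pm iT_j\), with \(-d\le r\le c\), Lemma~\ref{lem:admissible-heights} gives \(\frac{\zeta'}{\zeta}(s+z)\ll(\log T_j)^2\). The kernel satisfies
\[
|X^z e^{\alpha^2Yz^2}/z|\le X^{c}e^{\alpha^2Y\max(c^2,d^2)}e^{-\alpha^2YT_j^2}/T_j .
\]
The horizontal integrals are therefore \(O((c+d)(\log T_j)^2e^{-\alpha^2YT_j^2})\to0\).

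The right vertical integral converges to \(R_{X,\alpha,d}(s)\) by definition, once we know that the other limits exist. To see this, we show that the residue sum converges absolutely, so that its truncations converge to the full sum. The zeros in the strip satisfy \(|X^{\rho-s}|\le X^{c}\). Writing \(\gamma\) for the ordinate of \(\rho\), we have \(|e^{\alpha^2Y(\rho-s)^2}|\le e^{\alpha^2Y\max(c,d)^2}e^{-\alpha^2Y(\gamma-t)^2}\). The number of zeros with \(|\gamma|\le T\) is \(O(T\log T)\), so the Gaussian factor makes \(\sum_\rho m_\rho|\cdot|\) converge. The trivial-zero sum is finite, because only finitely many \(m\) satisfy \(-d<\Ree(-2m-s)<c\).

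Since every other term in the identity has a limit, the limit of the left-shifted integrals exists and equals \(R_{X,\alpha,d}(s)\). If that shifted integral is absolutely convergent, this limit coincides with the ordinary integral over \(\Ree z=-d\). This establishes Eq.~\eqref{eq:explicit-field-defect}.

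Finally, Eq.~\eqref{eq:explicit-defect} follows by taking real parts of Eq.~\eqref{eq:explicit-field-defect} and multiplying by \(\sigma-\tfrac12\), using Eq.~\eqref{eq:defect-definition}.

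The main obstacle is the uniform control on the horizontal segments. Lemma~\ref{lem:admissible-heights} gives the \((\log T_j)^2\) bound, and the Gaussian decay of the kernel then dominates it. A secondary point is that the integrand may have poles on \(\Ree z=c\) only if \(\Ree(s+c)\le1\), which the hypotheses exclude; poles on \(\Ree z=-d\) are excluded by the choice of \(d\).
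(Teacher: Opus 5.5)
Your proposal is correct and follows the paper's proof essentially step for step. The shared ingredients are the truncated rectangle along admissible heights \(T_j\) (with \(A=\sigma-d\), \(B=\sigma+c\), \(\tau=t\)), the same residue bookkeeping, Gaussian decay on the horizontal sides, and absolute convergence of the crossed-zero sum via Riemann--von Mangoldt. Two small points: your convergence bound for that sum should also control the denominator, since \(s\) is not a zero and the zeros are discrete, so \(\inf_\rho|\rho-s|>0\) (the paper states this explicitly). Your observation that the limit defining \(R_{X,\alpha,d}(s)\) exists because every other term converges is a detail the paper leaves implicit.
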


\begin{proof}
Set
\begin{equation}
        F_s(z)=
        \frac{\zeta'}{\zeta}(s+z)
        \frac{X^z\exp(\alpha^2Yz^2)}{z}.
\end{equation}
Shift the contour from \(\Ree z=c\) to \(\Ree z=-d\), keeping upward
orientation on both vertical lines. Equivalently, the left side of the
positively oriented rectangle has been moved to the right-hand side of the
identity and rewritten with upward orientation. The finite-rectangle residue
theorem gives
\begin{equation}
\begin{aligned}
        \frac{1}{2\pi i}
        \int_{c-iT_j}^{c+iT_j}F_s(z)\,\dd z
        &=
        \frac{1}{2\pi i}
        \int_{-d-iT_j}^{-d+iT_j}F_s(z)\,\dd z             \\
        &\quad+
        \sum_{\substack{z_*:\,-d<\Ree z_*<c\\ |\Imm z_*|<T_j}}
        \Res_{z=z_*}F_s(z)+E_j,
\end{aligned}
\end{equation}
where \(E_j\) is the signed contribution of the horizontal sides. The heights
are chosen by Lemma~\ref{lem:admissible-heights}, with
\(A=\Ree s-d\), \(B=\Ree s+c\), and \(\tau=\Imm s\).

On a horizontal side \(z=u\pm iT_j\),
\begin{equation}
        \left|\exp(\alpha^2Yz^2)\right|
        =
        \exp\!\left(\alpha^2Y(u^2-T_j^2)\right),
        \qquad
        \left|\frac{X^z}{z}\right|
        \le
        \frac{X^c}{T_j}.
\end{equation}
Together with Lemma~\ref{lem:admissible-heights}, this gives
\begin{equation}
        E_j=
        O_{s,c,d,X,\alpha,Y}
        \left(
        \frac{(\log T_j)^2}{T_j}
        e^{-\alpha^2YT_j^2}
        \right)=o(1)
        \qquad (j\to\infty).
\end{equation}
The right vertical integrals converge to \(P_{X,\alpha}(s)\), and the left
vertical integrals converge to \(R_{X,\alpha,d}(s)\) by
Eq.~\eqref{eq:shifted-contour-limit}.

We next pass to the infinite zero sum. For a crossed non-trivial zero
\(\rho=\beta+i\gamma\), write
\begin{equation}
        \rho-s=x+i(\gamma-\Imm s),
        \qquad -d<x<c .
\end{equation}
Then
\begin{equation}
        \left|
        X^{\rho-s}\exp(\alpha^2Y(\rho-s)^2)
        \right|
        \le
        C_{c,d,X,\alpha,Y}
        \exp\!\left(-\alpha^2Y(\gamma-\Imm s)^2\right).
\end{equation}
Since \(s\) is not a zero, the denominator is bounded away from zero in any
bounded ordinate interval; for large \(|\gamma-\Imm s|\),
\begin{equation}
        |\rho-s|\gg_{s,c,d}1+|\gamma-\Imm s|.
\end{equation}
The Riemann--von Mangoldt estimate, applied in unit intervals, therefore gives
absolute convergence of
\begin{equation}
        \sum_{\rho:\,-d<\Ree(\rho-s)<c}
        \frac{
        m_\rho
        \left|X^{\rho-s}\exp(\alpha^2Y(\rho-s)^2)\right|
        }{
        |\rho-s|}
        <\infty .
\end{equation}
Thus the finite residue sums converge to the displayed zero sum. The
trivial-zero sum is finite for fixed \(s,c,d\), and the pole term is a single
residue.

The residue at \(z=0\) is \(\zeta'/\zeta(s)\). A zero \(s+z=\rho\) of
multiplicity \(m_\rho\) contributes
\begin{equation}
        \frac{m_\rho X^{\rho-s}\exp(\alpha^2Y(\rho-s)^2)}{\rho-s}.
\end{equation}
The pole \(s+z=1\) contributes
\begin{equation}
        -\frac{X^{1-s}\exp(\alpha^2Y(1-s)^2)}{1-s},
\end{equation}
and a trivial zero \(s+z=-2m\) contributes
\begin{equation}
        \frac{X^{-2m-s}\exp(\alpha^2Y(-2m-s)^2)}{-2m-s}.
\end{equation}
Subtracting the residue at \(z=0\) gives Eq.~\eqref{eq:explicit-field-defect};
multiplication by \(\sigma-1/2\) and taking real parts gives
Eq.~\eqref{eq:explicit-defect}.
\end{proof}
Concerning the preceding result, the new object is the
horizontal prime-force defect \(\Delta_{X,\alpha}\), together with the local
profiles extracted from it; the contour shift supplies the classical
explicit-formula mechanism.

\section{Trivial Zero Contribution}

\begin{proposition}[Trivial zero contribution]
\label{prop:trivial-zero}
Let \(S\) be a fixed compact subset of the critical strip and set
\begin{equation}
        \sigma_- = \inf_{w\in S}\Ree w .
\end{equation}
For fixed \(d>0\) and \(s\in S\),
\begin{equation}
        \mathcal T_{X,\alpha}(s)
        =
        \sum_{m\ge1:\,-d<\Ree(-2m-s)<c}
        \frac{X^{-2m-s}\exp(\alpha^2Y(-2m-s)^2)}{-2m-s}
\end{equation}
has finitely many terms. If \(d<2+\sigma_-\), then
\(\mathcal T_{X,\alpha}(s)=0\) for all \(s\in S\).
\end{proposition}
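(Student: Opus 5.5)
The plan is to reduce both assertions to the elementary inequality defining the summation range. For \(s=\sigma+it\in S\), the trivial zero \(-2m\) is counted exactly when
\begin{equation}
        -d<\Ree(-2m-s)=-2m-\sigma<c ,
\end{equation}
and only the left inequality matters: the right one, \(-2m-\sigma<c\), holds automatically because \(\sigma>0\) on the critical strip and \(c>0\). The left inequality is equivalent to \(2m<d-\sigma\).

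\emph{Finiteness.} For a given \(s\), the condition \(2m<d-\sigma\) forces \(1\le m<(d-\sigma)/2\le d/2\), since \(\sigma>0\). So at most \(\lfloor d/2\rfloor\) indices occur, and this bound is uniform over \(s\in S\). Each term is also well defined: \(-2m-s\neq0\) because \(\Ree(-2m-s)<-2<0\). Hence \(\mathcal T_{X,\alpha}(s)\) is a finite sum.

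\emph{Vanishing.} Assume \(d<2+\sigma_-\). For every \(s\in S\) and every \(m\ge1\),
\begin{equation}
        -2m-\sigma\le -2-\sigma\le -2-\sigma_- < -d ,
\end{equation}
using \(\sigma\ge\sigma_-\). So the condition \(-d<\Ree(-2m-s)\) fails for every \(m\ge1\), the index set is empty, and \(\mathcal T_{X,\alpha}(s)=0\) for all \(s\in S\).

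There is no analytic obstacle here; the only care needed is with the boundary case. The strict inequality in the range condition of Theorem~\ref{thm:explicit-defect} is what allows equality in \(-2m-\sigma\le-2-\sigma_-\). The case \(\Ree(-2m-s)=-d\) cannot occur anyway, since the theorem chooses \(d\) so that no pole lies on the shifted contour. Compactness of \(S\) is used only to ensure \(\sigma_-\) is attained and positive, so the uniform bound on the number of terms holds over all of \(S\).
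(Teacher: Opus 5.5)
Your proof is correct and follows the paper's argument. Both proofs read off the range condition \(-d<-2m-\sigma<c\) to get finiteness, then use the chain \(-2m-\sigma\le-2-\sigma_-<-d\) to show the index set is empty when \(d<2+\sigma_-\); your explicit bound \(m<d/2\) is a slight sharpening of the paper's bare finiteness claim, and it needs only \(\sigma\ge0\), not compactness of \(S\).
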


\begin{proof}
A term occurs precisely when \(-d<\Ree(-2m-s)<c\). Since
\(\Ree(-2m-s)=-2m-\Ree s\) and \(s\in S\), the crossed set contains
finitely many \(m\ge1\). If \(d<2+\sigma_-\), then
\(\Ree(-2m-s)\le -2-\sigma_-<-d\), so all trivial-zero residues remain
to the left of the shifted contour.
\end{proof}

\section{Single-Zero Profiles}

Fix a simple non-trivial zero \(\rho_0=\beta_0+i\gamma_0\) and set
\begin{equation}
        s=\rho_0+\frac{\lambda}{Y},
        \qquad
        \lambda=a+ib,
        \qquad
        Y=\log X.
\end{equation}
Uniform asymptotics are for \(\lambda\in K\), where
\(K\Subset\C\setminus\{0\}\), unless stated otherwise.

\begin{definition}[Single-zero local contribution]
For \(s\neq\rho_0\), define
\begin{equation}
        \mathcal L_{\rho_0,X,\alpha}(s)
        =
        \left(\sigma-\frac12\right)
        \Ree\left(
        \frac{X^{\rho_0-s}\exp(\alpha^2Y(\rho_0-s)^2)}{\rho_0-s}
        \right).
\end{equation}
\end{definition}

The following lemma gives an exact evaluation of the selected residue appearing
in the zero-side explicit formula.

\begin{lemma}[Exact selected-zero formula]
\label{lem:exact-local}
Let \(\rho_0=\beta_0+i\gamma_0\) be simple and let
\begin{equation}
        s=\rho_0+\frac{\lambda}{Y},
        \qquad
        \lambda=a+ib\neq0.
\end{equation}
Then
\begin{equation}
        \mathcal L_{\rho_0,X,\alpha}
        \left(\rho_0+\frac{\lambda}{Y}\right)
        =
        -
        \left(\beta_0-\frac12+\frac{a}{Y}\right)
        Y\,
        \Ree\left(
        \frac{\exp(-\lambda+\alpha^2\lambda^2/Y)}{\lambda}
        \right).
        \label{eq:exact-local}
\end{equation}
\end{lemma}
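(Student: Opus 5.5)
This is a direct substitution, so I will simply unwind the definition of \(\mathcal L_{\rho_0,X,\alpha}\) at the point \(s=\rho_0+\lambda/Y\). No earlier result beyond the definition is needed, and the hypothesis that \(\rho_0\) is simple plays no role in the computation itself.

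First, the prefactor. Since \(\Ree s=\beta_0+a/Y\), the weight \(\sigma-\tfrac12\) equals \(\beta_0-\tfrac12+a/Y\).

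Next, the three pieces of the residue quotient, using \(\rho_0-s=-\lambda/Y\):
\begin{itemize}
\item \(X^{\rho_0-s}=e^{Y(\rho_0-s)}=e^{-\lambda}\), using \(X=e^Y\) with the principal branch. This is unambiguous because \(X^z\) means \(e^{z\log X}\) in the kernel \eqref{eq:kernel}.
\item \(\alpha^2Y(\rho_0-s)^2=\alpha^2Y\lambda^2/Y^2=\alpha^2\lambda^2/Y\).
\item \(1/(\rho_0-s)=-Y/\lambda\), which is well defined since \(\lambda\neq0\).
\end{itemize}

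Multiplying these gives
\begin{equation}
\frac{X^{\rho_0-s}\exp(\alpha^2Y(\rho_0-s)^2)}{\rho_0-s}
=-Y\,\frac{\exp(-\lambda+\alpha^2\lambda^2/Y)}{\lambda}.
\end{equation}
Since \(Y>0\) is real, taking real parts commutes with multiplication by \(-Y\). Multiplying by the prefactor then yields \eqref{eq:exact-local}.

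There is no genuine obstacle. The only point needing care is the sign bookkeeping in \(\rho_0-s=-\lambda/Y\): it produces both the minus sign in front and the \(e^{-\lambda}\) factor, while the square in the Gaussian exponent makes its sign irrelevant there.
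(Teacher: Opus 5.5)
Your proposal is correct and follows the paper's proof: substitute \(\rho_0-s=-\lambda/Y\) to obtain \(X^{\rho_0-s}=e^{-\lambda}\), \(\exp(\alpha^2Y(\rho_0-s)^2)=\exp(\alpha^2\lambda^2/Y)\), \((\rho_0-s)^{-1}=-Y/\lambda\), and \(\sigma-\tfrac12=\beta_0-\tfrac12+a/Y\). You also correctly note that simplicity of \(\rho_0\) plays no role in the computation and that the real factor \(-Y\) can be pulled out of \(\Ree\).
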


\begin{proof}
Here \(\rho_0-s=-\lambda/Y\), hence
\(X^{\rho_0-s}=e^{-\lambda}\),
\(\exp(\alpha^2Y(\rho_0-s)^2)=\exp(\alpha^2\lambda^2/Y)\), and
\((\rho_0-s)^{-1}=-Y/\lambda\). Also
\(\sigma-1/2=\beta_0-1/2+a/Y\). Substitution proves the formula.
\end{proof}

We now turn to the critical-line selected-zero profile.

\begin{theorem}[Universal bounded selected-zero profile at a critical-line zero]
\label{thm:central-profile}
Let \(\rho_0=1/2+i\gamma_0\) be simple. For
\begin{equation}
        s=\rho_0+\frac{\lambda}{Y},
        \qquad
        \lambda=a+ib\neq0,
\end{equation}
\begin{equation}
        \mathcal L_{\rho_0,X,\alpha}(s)
        =
        -a\,
        \Ree\left(
        \frac{\exp(-\lambda+\alpha^2\lambda^2/Y)}{\lambda}
        \right).
        \label{eq:central-profile-exact}
\end{equation}
Consequently, as \(X\to\infty\), uniformly for \(\lambda\in K\),
\begin{equation}
        \mathcal L_{\rho_0,X,\alpha}(s)
        =
        -a\,
        \Ree\left(\frac{e^{-\lambda}}{\lambda}\right)
        +O_K\left(\frac1Y\right).
        \label{eq:central-profile}
\end{equation}
\end{theorem}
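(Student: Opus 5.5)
The exact identity \eqref{eq:central-profile-exact} is an immediate specialization of Lemma~\ref{lem:exact-local}. Setting \(\beta_0=1/2\) in \eqref{eq:exact-local}, the prefactor \(\beta_0-\tfrac12+a/Y\) reduces to \(a/Y\). Multiplying by the factor \(Y\) then leaves exactly
\[
-a\,\Ree\bigl(\exp(-\lambda+\alpha^2\lambda^2/Y)/\lambda\bigr).
\]
Nothing beyond the substitution \(s=\rho_0+\lambda/Y\) and \(\sigma-\tfrac12=a/Y\) is needed, and Lemma~\ref{lem:exact-local} has already carried that out. The cancellation of the \(Y\) is the point of the theorem: the profile stays bounded at a critical-line zero.

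For the asymptotic statement \eqref{eq:central-profile}, I would write
\[
\exp(-\lambda+\alpha^2\lambda^2/Y)=e^{-\lambda}\,e^{\alpha^2\lambda^2/Y}
\]
and bound the difference
\[
-a\,\Ree\Bigl(\frac{e^{-\lambda}}{\lambda}\bigl(e^{\alpha^2\lambda^2/Y}-1\bigr)\Bigr).
\]
Since \(K\Subset\C\setminus\{0\}\) is compact and avoids \(0\), there are constants \(M_K,\ m_K>0\) with \(|\lambda|\le M_K\) and \(|\lambda|\ge m_K\) on \(K\). Hence \(|a|\le M_K\), \(|e^{-\lambda}|\le e^{M_K}\), and \(|1/\lambda|\le 1/m_K\). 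Next, for \(|w|\le1\) we have \(|e^w-1|\le e\,|w|\). Taking \(w=\alpha^2\lambda^2/Y\), this applies once \(Y\ge\alpha^2M_K^2\), and gives \(|e^{\alpha^2\lambda^2/Y}-1|\le e\,\alpha^2M_K^2/Y\). Multiplying these bounds yields an error of size at most \(e^{M_K+1}\alpha^2M_K^3/(m_KY)\), which is \(O_K(1/Y)\) uniformly in \(\lambda\in K\). The implied constant depends on \(\alpha\) as well, and I would note that \(\alpha\) is regarded as fixed.

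There is no genuine obstacle here. The only points requiring care are uniformity and the role of the excluded point \(\lambda=0\). Uniformity holds because every bound above depends only on \(M_K\), \(m_K\) and \(\alpha\). Excluding \(\lambda=0\) is what makes \(1/\lambda\) bounded on \(K\). Both are handled by the compactness of \(K\) away from the origin.
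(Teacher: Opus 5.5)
Your proposal is correct and follows essentially the same route as the paper. Both specialize Lemma~\ref{lem:exact-local} at \(\beta_0=1/2\), so that the factor \(Y\) cancels, and then use \(\exp(\alpha^2\lambda^2/Y)=1+O_K(1/Y)\) uniformly on \(K\); your explicit constants \(M_K,m_K\) and the bound \(|e^w-1|\le e|w|\) simply spell out what the paper leaves implicit, including the harmless dependence on the fixed \(\alpha\).
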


\begin{proof}
In Eq.~\eqref{eq:exact-local}, \(\beta_0=1/2\), so
\(\beta_0-1/2+a/Y=a/Y\), giving Eq.~\eqref{eq:central-profile-exact}. The
uniform expansion \(\exp(\alpha^2\lambda^2/Y)=1+O_K(1/Y)\) gives
Eq.~\eqref{eq:central-profile}.
\end{proof}

Concerning the preceding result, we note that
the leading term in Eq.~\eqref{eq:central-profile} is independent of \(\alpha\).
The Gaussian width affects the \(O_K(1/Y)\) correction and the nonlocal
damping, while the leading critical-line selected-zero profile remains
universal.

We now state the following theorem.

\begin{theorem}[Linear selected-zero spike at a hypothetical off-line zero]
\label{thm:offline-spike}
Let \(\rho_0=\beta_0+i\gamma_0\) be simple with \(\beta_0\neq1/2\). For
\begin{equation}
        s=\rho_0+\frac{\lambda}{Y},
        \qquad
        \lambda=a+ib\neq0,
\end{equation}
\begin{equation}
        \frac1Y\mathcal L_{\rho_0,X,\alpha}(s)
        =
        -
        \left(\beta_0-\frac12+\frac{a}{Y}\right)
        \Ree\left(
        \frac{\exp(-\lambda+\alpha^2\lambda^2/Y)}{\lambda}
        \right).
        \label{eq:offline-spike}
\end{equation}
Consequently, as \(X\to\infty\), uniformly for \(\lambda\in K\),
\begin{equation}
        \frac1Y\mathcal L_{\rho_0,X,\alpha}(s)
        \longrightarrow
        -
        \left(\beta_0-\frac12\right)
        \Ree\left(\frac{e^{-\lambda}}{\lambda}\right).
        \label{eq:offline-spike-limit}
\end{equation}
\end{theorem}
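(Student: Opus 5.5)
The plan is to read Eq.~\eqref{eq:offline-spike} straight off Lemma~\ref{lem:exact-local} and then take the limit termwise. First, Eq.~\eqref{eq:exact-local} holds for any simple zero $\rho_0$ and any $\lambda\neq0$, with no use of $\beta_0=1/2$. Dividing both sides by $Y=\log X>0$ gives exactly
\begin{equation*}
\frac1Y\mathcal L_{\rho_0,X,\alpha}(s)
=
-\left(\beta_0-\frac12+\frac aY\right)
\Ree\left(\frac{\exp(-\lambda+\alpha^2\lambda^2/Y)}{\lambda}\right),
\end{equation*}
which is Eq.~\eqref{eq:offline-spike}. The hypothesis $\beta_0\neq1/2$ is not needed for the identity. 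It only ensures that the limit below is a nontrivial profile of size of order $Y$, as opposed to the bounded profile of Theorem~\ref{thm:central-profile}.

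For the limit Eq.~\eqref{eq:offline-spike-limit}, fix a compact $K\Subset\C\setminus\{0\}$. Choose constants $M$ and $\delta>0$ with $|\lambda|\le M$ and $|\lambda|\ge\delta$ on $K$. I would then control the two $Y$-dependent factors separately.

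The prefactor satisfies $\beta_0-1/2+a/Y=(\beta_0-1/2)+O_K(1/Y)$, since $|a|\le M$.

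For the exponential factor, write $w=\alpha^2\lambda^2/Y$, so that $|w|\le\alpha^2M^2/Y$. The elementary bound $|e^w-1|\le|w|e^{|w|}$ then gives $\exp(\alpha^2\lambda^2/Y)=1+O_{K,\alpha}(1/Y)$ uniformly on $K$. Together with $|e^{-\lambda}/\lambda|\le e^{M}/\delta$, this yields
\begin{equation*}
\frac{\exp(-\lambda+\alpha^2\lambda^2/Y)}{\lambda}=\frac{e^{-\lambda}}{\lambda}+O_{K,\alpha}(1/Y).
\end{equation*}

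Taking real parts preserves this error. Multiplying the two bounded expansions, the product differs from $-(\beta_0-\tfrac12)\Ree(e^{-\lambda}/\lambda)$ by $O_{K,\alpha,\beta_0}(1/Y)$. This error tends to $0$ uniformly on $K$ as $X\to\infty$, which gives Eq.~\eqref{eq:offline-spike-limit} together with an explicit rate.

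There is no genuine obstacle in this argument. The only point needing care is the uniformity in $\lambda$. It rests on $K$ being bounded, which controls the Gaussian correction and the term $a/Y$, and on $K$ staying away from $0$, which keeps the factor $1/\lambda$ bounded. This is the same mechanism already used in the proof of Theorem~\ref{thm:central-profile}.
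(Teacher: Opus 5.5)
Your proposal is correct and follows the paper's proof: Eq.~\eqref{eq:offline-spike} is Eq.~\eqref{eq:exact-local} divided by \(Y\). The limit then follows from the boundedness of \(e^{-\lambda}/\lambda\) on \(K\) together with the uniform convergences \(a/Y\to0\) and \(\exp(\alpha^2\lambda^2/Y)\to1\). Your explicit \(O_{K,\alpha,\beta_0}(1/Y)\) rate is a harmless sharpening of that same argument.
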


\begin{proof}
Eq.~\eqref{eq:offline-spike} is Eq.~\eqref{eq:exact-local} divided by
\(Y\). Since \(K\Subset\C\setminus\{0\}\), \(e^{-\lambda}/\lambda\) is
bounded on \(K\), while \(a/Y\to0\) and
\(\exp(\alpha^2\lambda^2/Y)\to1\) uniformly. This proves the limit.
\end{proof}

\begin{remark}[Scope of Theorem~\ref{thm:offline-spike}]
\label{rem:offline-scope}
Theorem~\ref{thm:offline-spike} concerns the isolated selected residue
\(\mathcal L_{\rho_0,X,\alpha}\) only. The nonlocal remainder
\(\mathcal E_{\rho_0,X,\alpha}\) is not controlled at a hypothetical off-line
\(\rho_0\): both Theorems~\ref{thm:full-remainder-localization}
and~\ref{thm:rh-full-local-profile} take \(\rho_0=1/2+i\gamma_0\), and their
pole-damping and shifted-contour hypotheses are tuned to the critical-line
case. The linear scaling in \(\log X\) predicted here is therefore a
statement about the residue term of the explicit formula, not about the full
defect \(\Delta_{X,\alpha}\). Its use as a numerical diagnostic requires a
model in which the nonlocal remainder is separately controlled.
\end{remark}

\section{Full Nonlocal Remainder}

Let \(s=\rho_0+\lambda/Y\) and write
\begin{equation}
        \Delta_{X,\alpha}(s)
        =
        \mathcal L_{\rho_0,X,\alpha}(s)
        +
        \mathcal E_{\rho_0,X,\alpha}(s),
        \label{eq:full-decomposition}
\end{equation}
where \(\mathcal E_{\rho_0,X,\alpha}\) contains all terms other than
\(\rho_0\).

\begin{proposition}[Explicit nonlocal remainder]
\label{prop:explicit-remainder}
Assume that \(\rho_0\) is a simple zero included among the zeros crossed in
Theorem~\ref{thm:explicit-defect}. Use the same \(c,d\), and the same
crossed-zero convention, as in that theorem. Then
\begin{equation}
\begin{aligned}
        \mathcal E_{\rho_0,X,\alpha}(s)
        &=
        \left(\sigma-\frac12\right)
        \Ree
        \sum_{\substack{\rho'\neq\rho_0\\ -d<\Ree(\rho'-s)<c}}
        \frac{m_{\rho'}X^{\rho'-s}\exp(\alpha^2Y(\rho'-s)^2)}{\rho'-s} \\
        &\quad
        -
        \left(\sigma-\frac12\right)
        \Ree\left[
        \mathbf 1_{\{-d<\Ree(1-s)<c\}}
        \frac{X^{1-s}\exp(\alpha^2Y(1-s)^2)}{1-s}
        \right] \\
        &\quad
        +
        \left(\sigma-\frac12\right)
        \Ree
        \sum_{m\ge1:\,-d<\Ree(-2m-s)<c}
        \frac{X^{-2m-s}\exp(\alpha^2Y(-2m-s)^2)}{-2m-s} \\
        &\quad
        +
        \left(\sigma-\frac12\right)
        \Ree R_{X,\alpha,d}(s).
\end{aligned}
        \label{eq:nonlocal-remainder}
\end{equation}
\end{proposition}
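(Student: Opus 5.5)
The plan is to read the proposition directly off Theorem~\ref{thm:explicit-defect} by isolating the \(\rho_0\) term. First, I would apply Eq.~\eqref{eq:explicit-defect} at \(s=\rho_0+\lambda/Y\). This point is admissible because \(\lambda\neq0\) gives \(s\neq\rho_0\). I would also shrink \(K\), or take \(Y\) large, if needed so that \(s\) is not another zero or the pole; since the zeros of \(\zeta\) are isolated, this is automatic for large \(Y\) with \(\lambda\in K\). The hypothesis that \(\rho_0\) is among the crossed zeros means \(-d<\Ree(\rho_0-s)<c\). So the zero sum \(\mathcal Z_{X,\alpha}(s)\) contains the single term with \(\rho=\rho_0\) and \(m_{\rho_0}=1\), because \(\rho_0\) is simple.

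Second, I would split the sum as
\[
\mathcal Z_{X,\alpha}(s)=\frac{X^{\rho_0-s}\exp(\alpha^2Y(\rho_0-s)^2)}{\rho_0-s}+\sum_{\substack{\rho'\neq\rho_0\\ -d<\Ree(\rho'-s)<c}}\frac{m_{\rho'}X^{\rho'-s}\exp(\alpha^2Y(\rho'-s)^2)}{\rho'-s}.
\]
This split is legitimate because the full series converges absolutely, as shown in the proof of Theorem~\ref{thm:explicit-defect}. Removing one term therefore leaves an absolutely convergent series, and \(\Ree\) and multiplication by the real factor \(\sigma-\tfrac12\) distribute over the split.

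Third, multiplying by \(\sigma-\tfrac12\) and taking real parts, the isolated term is exactly \(\mathcal L_{\rho_0,X,\alpha}(s)\) by its definition. Hence \(\Delta_{X,\alpha}(s)-\mathcal L_{\rho_0,X,\alpha}(s)\) equals \((\sigma-\tfrac12)\Ree\) of the following four pieces:
\begin{itemize}
\item the remaining zero sum;
\item \(\mathcal P_{X,\alpha}(s)\), which carries the minus sign and the indicator;
\item \(\mathcal T_{X,\alpha}(s)\);
\item \(R_{X,\alpha,d}(s)\).
\end{itemize}
By the decomposition Eq.~\eqref{eq:full-decomposition}, this difference is \(\mathcal E_{\rho_0,X,\alpha}(s)\). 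Writing the four pieces out with linearity of \(\Ree\) gives Eq.~\eqref{eq:nonlocal-remainder} line by line.

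There is no real analytic obstacle here. The only point needing care is bookkeeping: \(c\), \(d\) and the admissible heights \(T_j\) defining \(R_{X,\alpha,d}\) must be the same as in Theorem~\ref{thm:explicit-defect}, so that the crossed set and the limit defining \(R\) are unchanged. Beyond that, the crucial ingredient is the absolute convergence used to justify removing the \(\rho_0\) term.
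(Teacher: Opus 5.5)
Your proposal is correct and follows the paper's own argument. Both subtract the simple $\rho_0$ residue, which after multiplying by $\sigma-\tfrac12$ and taking real parts is exactly $\mathcal L_{\rho_0,X,\alpha}(s)$, from the crossed zero sum in Theorem~\ref{thm:explicit-defect}, leaving the other zeros, the pole term, the trivial-zero term, and $R_{X,\alpha,d}(s)$; your remarks on absolute convergence and on keeping $c$, $d$, and the heights $T_j$ fixed are sound bookkeeping that the paper leaves implicit.
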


\begin{proof}
Subtract the selected \(\rho_0\)-term from the crossed non-trivial-zero sum
in Theorem~\ref{thm:explicit-defect}. The pole term, the trivial-zero term,
and the shifted-contour term remain. Since \(\rho_0\) is simple, its crossed
contribution is \(\mathcal L_{\rho_0,X,\alpha}(s)\). Taking real parts and
multiplying by \(\sigma-1/2\) gives Eq.~\eqref{eq:nonlocal-remainder}.
\end{proof}

\section{Remainder Localization Under Conditional Damping and Contour Regularity}

The estimate
\begin{equation}
        \mathcal E_{\rho_0,X,\alpha}
        \left(\rho_0+\frac{\lambda}{Y}\right)=o(1)
\end{equation}
requires separate control of crossed non-selected zeros, the pole term,
possible trivial-zero terms, and the shifted contour integral.
Here and below, \(o(1)\) in the local scale means a quantity tending to zero
as \(X\to\infty\), uniformly for \(\lambda\) in each fixed compact set
\(K\Subset\mathbb C\setminus\{0\}\).
We begin with the zero side: the Gaussian damping functional describes the
geometry of non-selected zero contributions, and the finite-window certificate
turns this geometry into an exponential bound.
The localization theorem then combines this zero damping with pole,
trivial-zero, and shifted-contour controls.

\begin{definition}[Gaussian damping functional]
Let \(\rho_0=1/2+i\gamma_0\) be fixed. For a zero \(\rho'\), define
\begin{equation}
        Q_\alpha(\rho';\rho_0)
        =
        \Ree(\rho'-\rho_0)
        +
        \alpha^2\Ree\left((\rho'-\rho_0)^2\right).
\end{equation}
\end{definition}

\begin{proposition}[Geometry of the Gaussian damping functional]
\label{prop:damping-geometry}
Write \(\rho'-\rho_0=x+iy\). Then
\begin{equation}
        Q_\alpha(\rho';\rho_0)
        =
        x+\alpha^2(x^2-y^2).
        \label{eq:Q-geometry}
\end{equation}
Thus \(Q_\alpha(\rho';\rho_0)<0\) if and only if
\begin{equation}
        y^2>x^2+\frac{x}{\alpha^2}.
        \label{eq:damping-region}
\end{equation}
The boundary \(Q_\alpha=0\) is the real curve
\begin{equation}
        y^2=x^2+\frac{x}{\alpha^2},
        \label{eq:damping-boundary}
\end{equation}
on the portion where the right-hand side is non-negative. If
\(-1/\alpha^2<x<0\), then \(Q_\alpha<0\) for every real \(y\). In particular,
if \(x=0\) and \(y\neq0\), then
\begin{equation}
        Q_\alpha(\rho';\rho_0)=-\alpha^2y^2<0.
\end{equation}
\end{proposition}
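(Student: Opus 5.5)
The plan is a direct computation from the definition of \(Q_\alpha\). With \(\rho'-\rho_0=x+iy\), we have \(\Ree(\rho'-\rho_0)=x\) and
\[
(\rho'-\rho_0)^2=x^2-y^2+2ixy,
\]
so \(\Ree\big((\rho'-\rho_0)^2\big)=x^2-y^2\). Substituting both into the definition gives Eq.~\eqref{eq:Q-geometry} immediately.

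For the sign characterization, we use that \(\alpha>0\), so dividing by \(\alpha^2\) preserves inequalities. Then \(Q_\alpha<0\) is the inequality \(x+\alpha^2x^2-\alpha^2y^2<0\), which rearranges to \(\alpha^2y^2>\alpha^2x^2+x\). Dividing by \(\alpha^2\) yields Eq.~\eqref{eq:damping-region}, and every step is reversible. The same rearrangement with equality in place of strict inequality gives \(Q_\alpha=0\) if and only if \(y^2=x^2+x/\alpha^2\). A real \(y\) satisfying this exists exactly when the right-hand side is non-negative, which justifies the restriction in Eq.~\eqref{eq:damping-boundary}.

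For the strip statement, we factor \(x^2+x/\alpha^2=x\,(x+1/\alpha^2)\). When \(-1/\alpha^2<x<0\), the first factor is negative and the second positive, so the right-hand side of Eq.~\eqref{eq:damping-region} is negative. It is therefore strictly less than \(y^2\ge0\) for every real \(y\), and hence \(Q_\alpha<0\). The case \(x=0\) lies outside this open interval, so we treat it separately by substituting into Eq.~\eqref{eq:Q-geometry}: this gives \(Q_\alpha=-\alpha^2y^2\), which is negative whenever \(y\neq0\).

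There is no real obstacle here. The only point requiring care is keeping the equivalences two-sided, which rests on \(\alpha^2>0\), and remembering that the endpoint \(x=0\) must be handled by its own computation rather than by the interval argument.
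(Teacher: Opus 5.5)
Your proposal is correct and matches the paper's proof: substitute \(\Ree(x+iy)=x\) and \(\Ree((x+iy)^2)=x^2-y^2\), then rearrange reversibly using \(\alpha^2>0\). Your factorization \(x(x+1/\alpha^2)\) for the strip case and your separate check at \(x=0\) write out details the paper leaves as immediate.
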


\begin{proof}
Since \(\Ree(x+iy)=x\) and \(\Ree((x+iy)^2)=x^2-y^2\),
substitution gives Eq.~\eqref{eq:Q-geometry}. The inequality \(Q_\alpha<0\) is
\(x+\alpha^2x^2-\alpha^2y^2<0\), which is equivalent to
Eq.~\eqref{eq:damping-region}. The remaining statements follow immediately.
\end{proof}

The following Proposition is used in the Remainder
Localization Under Conditional Damping and Contour Regularity argument.

\begin{proposition}[Finite-window damping certificate]
\label{prop:finite-window-certificate}
Let \(K\Subset\C\setminus\{0\}\), let \(s=\rho_0+\lambda/Y\) with
\(\lambda\in K\), and let \(\mathcal Z_M(\rho_0)\) be a finite set of
non-selected zeros \(\rho'\neq\rho_0\). Suppose
\begin{equation}
        \eta_{\alpha,M}(\rho_0)
        :=
        -
        \max_{\rho'\in\mathcal Z_M(\rho_0)}
        Q_\alpha(\rho';\rho_0)
        >0.
        \label{eq:finite-window-margin}
\end{equation}
Then, as \(Y\to\infty\),
\begin{equation}
        \sum_{\rho'\in\mathcal Z_M(\rho_0)}
        \left|
        \frac{
        m_{\rho'}X^{\rho'-s}
        \exp(\alpha^2Y(\rho'-s)^2)}
        {\rho'-s}
        \right|
        =
        O_{K,\rho_0,\alpha,\mathcal Z_M}
        \left(e^{-\eta_{\alpha,M}(\rho_0)Y/2}\right).
        \label{eq:finite-window-certificate}
\end{equation}
If \(\mathcal Z_M(\rho_0)\) is empty, the assertion is vacuous.
\end{proposition}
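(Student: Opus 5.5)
The proof works term by term, since $\mathcal Z_M(\rho_0)$ is finite. Fix $\rho'\in\mathcal Z_M(\rho_0)$, write $\rho'-\rho_0=x+iy$, and note that
\[
\rho'-s=(\rho'-\rho_0)-\lambda/Y .
\]
We compute the modulus of the numerator exactly:
\[
\left|X^{\rho'-s}\exp\!\left(\alpha^2Y(\rho'-s)^2\right)\right|
=\exp\!\left(Y\,\Ree(\rho'-s)+\alpha^2Y\,\Ree\left((\rho'-s)^2\right)\right).
\]
Expanding in powers of $1/Y$ gives
\[
Y\,\Ree(\rho'-s)=Yx-\Ree\lambda ,
\]
\[
\alpha^2Y\,\Ree\left((\rho'-s)^2\right)
=\alpha^2Y(x^2-y^2)-2\alpha^2\,\Ree\left(\lambda(x-iy)\right)+\alpha^2\,\Ree(\lambda^2)/Y .
\]
The exponent therefore equals $Y\,Q_\alpha(\rho';\rho_0)+O(1)$. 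The $O(1)$ term is bounded uniformly for $\lambda\in K$ and $Y\ge1$, with a constant depending on $K$, $\alpha$ and the finitely many values $x,y$ coming from $\mathcal Z_M$.

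Next we control the denominator. Since $\rho'\neq\rho_0$, the distance $\delta=\min_{\rho'\in\mathcal Z_M}|\rho'-\rho_0|$ is positive. Because $K$ is compact, $|\lambda|/Y\le\delta/2$ once $Y$ is large, and then
\[
|\rho'-s|\ge\delta/2 .
\]
So the denominator contributes at most $2/\delta$. The multiplicities $m_{\rho'}$ are fixed finite numbers.

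Combining the two bounds, each term is
\[
\ll \exp\!\left(Y\,Q_\alpha(\rho';\rho_0)\right)\le e^{-\eta_{\alpha,M}(\rho_0)Y},
\]
where the inequality uses the definition of $\eta_{\alpha,M}(\rho_0)$ in \eqref{eq:finite-window-margin}. Summing the finitely many terms gives a bound $O(e^{-\eta Y})$, which is stronger than the claimed $O(e^{-\eta Y/2})$. Alternatively, the $O(1)$ error can be absorbed into $\eta Y/2$ for large $Y$. The implied constant depends only on $K$, $\rho_0$, $\alpha$ and $\mathcal Z_M$.

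The only point needing care is the uniformity of the $O(1)$ term and of the denominator bound in $\lambda\in K$; both follow from compactness and finiteness. The empty case is trivial.
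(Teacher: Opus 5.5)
Your proof is correct and follows the paper's own argument essentially step for step: you write the exponent as $Y Q_\alpha(\rho';\rho_0)$ plus a term bounded uniformly for $\lambda\in K$, bound $|\rho'-s|$ below by half the minimal distance from $\rho_0$ to $\mathcal Z_M(\rho_0)$ once $Y$ is large, and sum over the finite set; your remark that this even gives $O(e^{-\eta_{\alpha,M}(\rho_0)Y})$ is also right. One harmless slip is that the cross term should be $-2\alpha^2\Ree\bigl(\lambda(x+iy)\bigr)$, not $-2\alpha^2\Ree\bigl(\lambda(x-iy)\bigr)$, but you only use that it is $O_{K,\alpha,\mathcal Z_M}(1)$, so nothing changes.
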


\begin{proof}
Assume \(\mathcal Z_M(\rho_0)\neq\emptyset\), and put
\(\delta_{\rho'}=\rho'-\rho_0\). Since
\(\rho'-s=\delta_{\rho'}-\lambda/Y\), the logarithm of the modulus of the
exponential factor equals
\begin{equation}
\begin{aligned}
&Y\Ree\left(\delta_{\rho'}-\frac{\lambda}{Y}\right)
+
\alpha^2Y\Ree\left(
\left(\delta_{\rho'}-\frac{\lambda}{Y}\right)^2
\right)       \\
&\qquad =
YQ_\alpha(\rho';\rho_0)
-\Ree\lambda
-2\alpha^2\Ree(\delta_{\rho'}\lambda)
+
\frac{\alpha^2}{Y}\Ree(\lambda^2).
\end{aligned}
\end{equation}
The last three terms are \(O_{K,\alpha,\mathcal Z_M}(1)\), uniformly for
\(\lambda\in K\). By Eq.~\eqref{eq:finite-window-margin}, the exponent is at most
\(-\eta_{\alpha,M}(\rho_0)Y+O_{K,\alpha,\mathcal Z_M}(1)\), hence at most
\(-\eta_{\alpha,M}(\rho_0)Y/2\) for large \(Y\).

Since \(\mathcal Z_M(\rho_0)\) is finite and excludes \(\rho_0\), there is
\(\mu>0\) such that \(|\rho'-\rho_0|\ge\mu\) for all
\(\rho'\in\mathcal Z_M(\rho_0)\). For large \(Y\), uniformly in
\(\lambda\in K\),
\begin{equation}
        |\rho'-s|
        =
        \left|\rho'-\rho_0-\frac{\lambda}{Y}\right|
        \ge
        \frac{\mu}{2}.
\end{equation}
Summing over the finite set proves Eq.~\eqref{eq:finite-window-certificate}.
\end{proof}

The zero-counting and large-height Gaussian-tail estimates used in the
localization theorem are recorded in
Appendix~\ref{app:conditional-localization}.

We can now state the conditional full nonlocal remainder localization theorem.

\begin{theorem}[Conditional full nonlocal remainder localization]
\label{thm:full-remainder-localization}
Let
\begin{equation}
        \rho_0=\frac12+i\gamma_0,
        \qquad
        \gamma_0>0,
\end{equation}
be a simple zero of \(\zeta\), and let \(K\Subset\C\setminus\{0\}\). Put
\begin{equation}
        s=\rho_0+\frac{\lambda}{Y},
        \qquad
        \lambda=a+ib\in K,
        \qquad
        Y=\log X.
\end{equation}
Choose \(c>1/2\). For all sufficiently large \(Y\), this ensures
\(\Ree(s+c)>1\) uniformly for \(\lambda\in K\). Choose \(d>0\) for the
explicit formula, with \(d\) satisfying the restrictions below.

Assume the following hypotheses.

\begin{enumerate}[label=\textup{(\roman*)}]
\item There exist constants \(M\ge M_0(\alpha,K)\) and \(\kappa>0\),
where \(M_0(\alpha,K)\) is as in Lemma~\ref{lem:large-height-tail}, such
that every crossed non-trivial zero \(\rho'\neq\rho_0\) with
\begin{equation}
        |\gamma'-\gamma_0|\le M
\end{equation}
satisfies
\begin{equation}
        Q_\alpha(\rho';\rho_0)\le -\kappa.
\end{equation}
Thus the bounded-height crossed zeros satisfy the finite-window damping
certificate of Proposition~\ref{prop:finite-window-certificate}.

\item The pole term is exponentially damped: there exists
\(\eta_{\rm pole}>0\) such that, uniformly for \(\lambda\in K\) and all
sufficiently large \(Y\),
\begin{equation}
        \Ree(1-s)+\alpha^2\Ree\left((1-s)^2\right)
        \le -\eta_{\rm pole}.
        \label{eq:pole-damping-condition}
\end{equation}
A convenient sufficient condition is
\begin{equation}
        \frac12+\alpha^2\left(\frac14-\gamma_0^2\right)<0.
        \label{eq:sufficient-pole-damping}
\end{equation}
Indeed, since \(s=1/2+i\gamma_0+\lambda/Y\),
\begin{equation}
        \Ree(1-s)+\alpha^2\Ree\left((1-s)^2\right)
        =
        \frac12+\alpha^2\left(\frac14-\gamma_0^2\right)+O_K(1/Y),
\end{equation}
uniformly for \(\lambda\in K\).

\item The explicit-formula contour parameters satisfy
\begin{equation}
        c>\frac12,
        \qquad
        0<d<\frac1{\alpha^2}.
\end{equation}
Moreover, uniformly for \(\lambda\in K\) and all sufficiently large \(Y\),
\begin{equation}
        d<2+\Ree\left(\rho_0+\frac{\lambda}{Y}\right).
\end{equation}
Both contour lines \(\Ree z=c\) and \(\Ree z=-d\) avoid the poles of
the integrand.

\item The shifted contour is chosen to avoid every zero and pole of
\(\zeta'/\zeta\) on \(w=s-d+iv\). On this line, assume that for some
fixed \(A>0\),
\begin{equation}
        \frac{\zeta'}{\zeta}(w)
        \ll_{d,K,\rho_0,A} \log^A(2+|v|+\gamma_0).
\end{equation}
This contour-regularity hypothesis controls the shifted line inside the
critical strip. The precise logarithmic power is
irrelevant for the theorem, because the shifted contour is multiplied by the
Gaussian factor \(e^{-\alpha^2Yv^2}\).
\end{enumerate}

Then there exist constants \(C_K,c_K>0\), depending on
\(K,\alpha,d,\kappa,M,\eta_{\rm pole},\rho_0,A\), such that
\begin{equation}
        \mathcal E_{\rho_0,X,\alpha}
        \left(\rho_0+\frac{\lambda}{Y}\right)
        =
        O_K\left(e^{-c_KY}\right)
        \label{eq:remainder-bound}
\end{equation}
uniformly for \(\lambda\in K\). Consequently,
\begin{equation}
        \Delta_{X,\alpha}
        \left(\rho_0+\frac{\lambda}{Y}\right)
        =
        -a\,
        \Ree\left(\frac{e^{-\lambda}}{\lambda}\right)
        +
        O_K\left(\frac1Y+e^{-c_KY}\right).
        \label{eq:full-profile}
\end{equation}
\end{theorem}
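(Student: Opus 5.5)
We start from the decomposition \eqref{eq:full-decomposition} and the explicit expression \eqref{eq:nonlocal-remainder} for \(\mathcal E_{\rho_0,X,\alpha}\) from Proposition~\ref{prop:explicit-remainder}. We bound its four pieces separately, uniformly for \(\lambda\in K\). The prefactor is harmless: \(\sigma-\tfrac12=a/Y=O_K(1/Y)\), so it suffices to show that the bracketed complex quantity (non-selected zero sum, pole term, trivial-zero term, and \(R_{X,\alpha,d}(s)\)) is \(O(e^{-c'Y})\) for some \(c'>0\).

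\emph{Trivial zeros and pole.} By hypothesis (iii), \(d<2+\Ree s\) for large \(Y\). The argument of Proposition~\ref{prop:trivial-zero} then applies directly: \(\Ree(-2m-s)\le -2-\Ree s<-d\), so \(\mathcal T_{X,\alpha}(s)=0\). For the pole term, write \(|X^{1-s}\exp(\alpha^2Y(1-s)^2)|=\exp\bigl(Y[\Ree(1-s)+\alpha^2\Ree((1-s)^2)]\bigr)\le e^{-\eta_{\rm pole}Y}\) by \eqref{eq:pole-damping-condition}. Since \(|1-s|\ge \gamma_0/2\) for large \(Y\), the pole term is \(O(e^{-\eta_{\rm pole}Y})\).

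\emph{Non-selected zeros.} Split the crossed zeros \(\rho'\neq\rho_0\) at \(|\gamma'-\gamma_0|\le M\).
\begin{itemize}[label={}]
\item The near window is a finite set. Hypothesis (i) gives margin \(\eta\ge\kappa\), so Proposition~\ref{prop:finite-window-certificate} yields \(O(e^{-\kappa Y/2})\).
\item For the far zeros, write \(\rho'-s=x+iy\) with \(-d<x<c\), \(|y|\ge M-O(1/Y)\). The modulus of the exponential factor is \(\exp\bigl(Y(x+\alpha^2(x^2-y^2))\bigr)\le \exp\bigl(Y(c+\alpha^2\max(c^2,d^2))-\alpha^2Yy^2\bigr)\).
\item Use \(|\rho'-s|\gg 1\) and Riemann--von Mangoldt in unit intervals (at most \(O(\log(|y|+\gamma_0+2))\) zeros per unit). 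Summing \(\sum_{k\ge M}\log(k+\gamma_0)\,e^{-\alpha^2Y(k^2-C)}\) gives \(O(e^{-c''Y})\) once \(\alpha^2M^2\) exceeds the constant \(c+\alpha^2\max(c^2,d^2)\) plus a margin.
\end{itemize}
This is exactly what the threshold \(M\ge M_0(\alpha,K)\) of Lemma~\ref{lem:large-height-tail} encodes, and I would invoke that lemma here rather than redo the tail count.

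\emph{Shifted contour.} On \(z=-d+iv\), hypothesis (iv) gives \(|\zeta'/\zeta(s+z)|\ll\log^A(2+|v|+\gamma_0)\), so the integral converges absolutely and \(R_{X,\alpha,d}\) is the ordinary line integral. The kernel satisfies \(|X^z\exp(\alpha^2Yz^2)/z|\le d^{-1}\exp\bigl(Y(-d+\alpha^2d^2)\bigr)e^{-\alpha^2Yv^2}\). The condition \(0<d<1/\alpha^2\) from (iii) makes \(-d+\alpha^2d^2=-d(1-\alpha^2d)<0\). Integrating the logarithmic factor against the Gaussian then gives \(R_{X,\alpha,d}=O(e^{-d(1-\alpha^2d)Y})\). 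Taking \(c_K\) as the minimum of the four exponents proves \eqref{eq:remainder-bound}. Combining this with Theorem~\ref{thm:central-profile} gives \eqref{eq:full-profile}.

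The main obstacle is the far-zero tail. The crude bound on \(x+\alpha^2x^2\) involves \(c\), which may exceed \(1/2\), so the Gaussian must beat this positive growth. This forces the quantitative choice of \(M_0\), and one must check that it is compatible with uniformity in \(K\) and with the \(\log\) growth of zero density near height \(\gamma_0\). If the appendix lemma needs a sharper tool, I would first try using \(x\le c\) and \(x^2\le\max(c,d)^2\) and absorbing everything into \(\alpha^2M^2/2\).
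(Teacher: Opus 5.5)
Your proposal is correct and follows the paper's proof essentially step for step: the same four-part split of \(\mathcal E_{\rho_0,X,\alpha}\), the finite-window certificate together with Lemma~\ref{lem:large-height-tail} and unit-interval zero counting for the non-selected zeros, the pole-damping bound, Proposition~\ref{prop:trivial-zero} for the trivial zeros, and the Gaussian bound \(e^{-Y(d-\alpha^2d^2)}\) on the shifted line. One correction to your obstacle paragraph: the lemma's threshold \(M_0(\alpha,K)\) comes from the critical-strip bound \(|x_{\rho'}|\le 1/2\), not from the crossing bound \(x<c\), so it is independent of \(c\) and \(d\); your cruder estimate has a threshold depending on \(c\) and \(d\), which hypothesis \textup{(i)} does not supply, so it should not replace the lemma.
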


The hypotheses have separate roles. The finite-window condition is a
zero-configuration condition: it is checked by evaluating
\(Q_\alpha(\rho';\rho_0)\) for non-selected crossed zeros in the window
\(|\gamma'-\gamma_0|\le M\). The pole condition controls the residue at
\(s+z=1\), while the shifted-contour hypothesis controls only the new vertical
line. Together, damping of the non-selected residues and regularity of the
shifted contour yield the selected-zero profile as the full local profile
up to an exponentially small error.

\begin{proof}
The proof is given in Appendix~\ref{app:conditional-localization}. It estimates
the four parts of the nonlocal remainder in Eq.~\eqref{eq:nonlocal-remainder}:
the crossed non-selected zeros, the pole term, the possible trivial-zero
terms, and the shifted contour integral. These estimates give
Eq.~\eqref{eq:remainder-bound}, and Theorem~\ref{thm:central-profile} then
gives Eq.~\eqref{eq:full-profile}.
\end{proof}

The damping condition in Theorem~\ref{thm:full-remainder-localization}
reflects the anisotropy of the Gaussian--Perron factor. 
Proposition~\ref{prop:damping-geometry} gives the boundary
\(Q_\alpha=0\): vertical displacement damps a non-selected zero, while
horizontal displacement can increase its exponential weight. 
The theorem applies when the damping and shifted-contour regularity
hypotheses both hold.

The following Corollary is a direct
consequence of the preceding theorem.

\begin{corollary}[Conditional full local profile]
\label{cor:conditional-full-profile}
Under the hypotheses of Theorem~\ref{thm:full-remainder-localization},
\begin{equation}
        \Delta_{X,\alpha}
        \left(\rho_0+\frac{\lambda}{\log X}\right)
        =
        -a\,
        \Ree\left(\frac{e^{-\lambda}}{\lambda}\right)
        +O_K\left(\frac1{\log X}\right)
\end{equation}
uniformly for \(\lambda=a+ib\in K\).
\end{corollary}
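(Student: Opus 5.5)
The corollary follows from Theorem~\ref{thm:full-remainder-localization}; only the error term needs to be simplified, and the plan has three steps.

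First, we invoke Eq.~\eqref{eq:full-profile}. Under the stated hypotheses it gives, uniformly for \(\lambda\in K\),
\[
\Delta_{X,\alpha}\left(\rho_0+\frac{\lambda}{Y}\right)
= -a\,\Ree\left(\frac{e^{-\lambda}}{\lambda}\right)+O_K\left(\frac1Y+e^{-c_KY}\right),
\qquad Y=\log X .
\]
This expansion comes from combining the decomposition Eq.~\eqref{eq:full-decomposition} with the exact critical-line profile Eq.~\eqref{eq:central-profile} and the remainder bound Eq.~\eqref{eq:remainder-bound}. Nothing about the zeros has to be re-verified. The contour parameters \(c,d\), the damping margin \(\kappa\), the pole margin \(\eta_{\rm pole}\) and the contour-regularity exponent \(A\) are held fixed, so \(c_K>0\) is a fixed constant independent of \(X\).

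Second, we absorb the exponential term. Since \(c_K>0\), the function \(Ye^{-c_KY}\) is bounded on \(Y\ge1\), with supremum at most \(1/(e\,c_K)\). Hence \(e^{-c_KY}\le (e\,c_K)^{-1}Y^{-1}\), and so
\[
O_K\left(\frac1Y+e^{-c_KY}\right)=O_K\left(\frac1Y\right)=O_K\left(\frac1{\log X}\right).
\]
The implied constant depends only on \(K\), \(\alpha\), \(d\), \(\kappa\), \(M\), \(\eta_{\rm pole}\), \(\rho_0\) and \(A\). This is the same dependence as in the theorem, with one extra factor \(1/c_K\).

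Third, we handle small \(X\). The asymptotic in the theorem holds only for \(Y\) sufficiently large, and that threshold is where hypotheses (i)--(iii) are assumed uniform. The corollary is an \(O\)-statement as \(X\to\infty\), so it suffices to state it on the same range \(Y\ge Y_0(K)\). No bound on \(\Delta_{X,\alpha}\) is needed on a bounded range of \(X\).

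No step here is a real obstacle. The only point needing care is that the constant \(c_K\) in Eq.~\eqref{eq:remainder-bound} is uniform in \(\lambda\in K\) and does not depend on \(X\). The theorem asserts exactly this, so the domination \(e^{-c_KY}\ll 1/Y\) holds uniformly over \(K\).
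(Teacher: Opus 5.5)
Your proposal is correct and follows the paper's proof, which derives the corollary in one line from Eq.~\eqref{eq:full-profile} together with \(Y=\log X\). Your explicit bound \(e^{-c_KY}\le (e\,c_K)^{-1}Y^{-1}\) spells out the step the paper leaves implicit, namely absorbing the exponential term into \(O_K(1/\log X)\).
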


\begin{proof}
This follows from Eq.~\eqref{eq:full-profile} and \(Y=\log X\).
\end{proof}

We may now formulate:

\begin{theorem}[RH-conditional full local profile]
\label{thm:rh-full-local-profile}
Assume RH. Let \(\rho_0=1/2+i\gamma_0\), \(\gamma_0>0\), be a simple
non-trivial zero of \(\zeta\), let \(K\Subset\C\setminus\{0\}\), and put
\begin{equation}
        s=\rho_0+\frac{\lambda}{Y},
        \qquad
        \lambda=a+ib\in K,
        \qquad
        Y=\log X.
\end{equation}
Assume
\begin{equation}
        \frac12+\alpha^2\left(\frac14-\gamma_0^2\right)<0.
        \label{eq:rh-pole-damping}
\end{equation}
Choose
\begin{equation}
        c>\frac12,
        \qquad
        0<d<\min\left\{\frac12,\frac1{\alpha^2}\right\}.
\end{equation}
Then, uniformly for \(\lambda\in K\),
\begin{equation}
        \mathcal E_{\rho_0,X,\alpha}
        \left(\rho_0+\frac{\lambda}{Y}\right)
        =
        O_K\left(e^{-c_KY}\right)
        \label{eq:rh-remainder-bound}
\end{equation}
for some \(c_K>0\), with constants depending on \(K,\alpha,d,\rho_0\). Hence
\begin{equation}
        \Delta_{X,\alpha}
        \left(\rho_0+\frac{\lambda}{Y}\right)
        =
        -a\,
        \Ree\left(\frac{e^{-\lambda}}{\lambda}\right)
        +
        O_K\left(\frac1Y+e^{-c_KY}\right),
        \label{eq:rh-full-profile}
\end{equation}
or, equivalently,
\begin{equation}
        \Delta_{X,\alpha}
        \left(\rho_0+\frac{\lambda}{\log X}\right)
        =
        -a\,
        \Ree\left(\frac{e^{-\lambda}}{\lambda}\right)
        +
        O_K\left(\frac1{\log X}+X^{-c_K}\right).
\end{equation}
\end{theorem}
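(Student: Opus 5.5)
The plan is to deduce the statement from Theorem~\ref{thm:full-remainder-localization}. Under RH, each of its hypotheses (i)--(iv) can be verified with explicit constants depending only on \(K,\alpha,d,\rho_0\). Then Eq.~\eqref{eq:rh-remainder-bound} is Eq.~\eqref{eq:remainder-bound}, Eq.~\eqref{eq:rh-full-profile} is Eq.~\eqref{eq:full-profile}, and the last display follows from \(e^{-c_KY}=X^{-c_K}\). Throughout, \(s=\rho_0+\lambda/Y\) with \(\lambda=a+ib\in K\), and \(Y\) is taken large enough that \(|\lambda|/Y\) is smaller than any fixed margin needed below.

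\textbf{Hypotheses (i) and (ii).} Under RH every non-trivial zero is \(\rho'=\tfrac12+i\gamma'\). Hence \(\Ree(\rho'-s)=-a/Y\), which lies in \((-d,c)\) for large \(Y\). So every non-trivial zero is crossed, uniformly in \(\lambda\in K\).

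For \(\rho'\neq\rho_0\) we have \(\rho'-\rho_0=iy\) with \(y=\gamma'-\gamma_0\neq0\). Proposition~\ref{prop:damping-geometry} (the case \(x=0\)) then gives \(Q_\alpha(\rho';\rho_0)=-\alpha^2y^2\).

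Fix \(M=M_0(\alpha,K)\) from Lemma~\ref{lem:large-height-tail}. Only finitely many zeros satisfy \(|\gamma'-\gamma_0|\le M\). Since \(\rho_0\) is simple, any other zero with the same ordinate would coincide with \(\rho_0\), so every non-selected zero in this window has \(y\neq0\). Therefore
\[
\mu:=\min\{|\gamma'-\gamma_0|:\rho'\neq\rho_0,\ |\gamma'-\gamma_0|\le M\}>0,
\]
and hypothesis (i) holds with \(\kappa=\alpha^2\mu^2\).

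Hypothesis (ii) follows from the assumed Eq.~\eqref{eq:rh-pole-damping}. The computation already recorded after Eq.~\eqref{eq:sufficient-pole-damping} shows we may take \(\eta_{\rm pole}=-\tfrac12\bigl(\tfrac12+\alpha^2(\tfrac14-\gamma_0^2)\bigr)\) for large \(Y\).

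\textbf{Hypothesis (iii).} The conditions \(c>1/2\) and \(0<d<1/\alpha^2\) are assumed directly. Also \(d<1/2<2+\tfrac12+a/Y\) for large \(Y\).

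For avoidance of poles on the contour lines, the poles of the integrand in \(z\) are:
\begin{itemize}
\item \(z=0\);
\item \(z=\rho'-s\), with real part \(-a/Y\);
\item \(z=1-s\), with real part \(\tfrac12-a/Y\);
\item \(z=-2m-s\), with real parts \(\le-\tfrac52+|a|/Y\).
\end{itemize}
For large \(Y\) none of these real parts equals \(-d\) or \(c\), because \(0<d<1/2\) and \(c>1/2\) are fixed.

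\textbf{Hypothesis (iv).} On the shifted line \(w=s-d+iv\) we have
\[
\Ree w=\tfrac12-d+a/Y\in\bigl[\tfrac12-\tfrac{3d}{2},\,\tfrac12-\tfrac d2\bigr]
\]
for large \(Y\). This is a fixed strip inside \((0,1)\), using \(d<1/2\); if \(\tfrac12-\tfrac{3d}2\le0\), one uses instead the fixed strip \([\tfrac12-d-\tfrac d2,\tfrac12]\), and Lemma~\ref{lem:log-derivative-away} allows arbitrary fixed \(A<B\).

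Under RH the line stays at horizontal distance at least \(d/2\) from every non-trivial zero. It stays at distance at least \(\tfrac12\) from the pole at \(1\), and well away from the trivial zeros. Since \(d/2\ge c_0/\log(3+|\Imm w|)\) for a suitable fixed \(c_0\), Lemma~\ref{lem:log-derivative-away} gives
\[
\frac{\zeta'}{\zeta}(w)\ll_{d}\log^2(3+|\gamma_0+\Imm\lambda/Y+v|)\ll_{d,K,\rho_0}\log^2(2+|v|+\gamma_0).
\]
This is (iv) with \(A=2\). If one prefers, the sharper RH bound \(\zeta'/\zeta\ll\log\) in strips off the critical line can be cited instead.

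\textbf{Expected difficulty.} The main point needing care is uniformity in \(\lambda\in K\) and \(Y\). The crossed-zero set, the avoidance of contour poles, and the strip containing the shifted line all move with \(a/Y\). They must be fixed once \(Y\ge Y_0(K,d,\alpha,\rho_0)\), so that the constants \(\kappa,M,\eta_{\rm pole},A\), and hence \(c_K,C_K\) from Theorem~\ref{thm:full-remainder-localization}, depend only on \(K,\alpha,d,\rho_0\). This is handled by the explicit margins above, all of which are \(O_K(1/Y)\) perturbations of fixed positive gaps. For bounded \(Y\le Y_0\) the statement is interpreted asymptotically, as in the theorem.
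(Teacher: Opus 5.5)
Your proposal is correct and is essentially the paper's argument. The remark following the theorem already presents it as an application of Theorem~\ref{thm:full-remainder-localization}, with hypothesis (i) supplied by $Q_\alpha(\rho';\rho_0)=-\alpha^2y_{\rho'}^2$; the appendix proof runs the same four estimates directly under RH (Lemmas~\ref{lem:rh-zero-localization}, \ref{lem:rh-pole-trivial}, \ref{lem:rh-contour-decay}) instead of citing that theorem, using the global gap $g_0$ in place of your window gap $\mu$ and applying Lemma~\ref{lem:log-derivative-away} at fixed distance from the critical line exactly as in your check of (iv).

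One small slip: a non-selected zero in the window has $\gamma'\neq\gamma_0$ because of RH (every zero has real part $\tfrac12$), not because $\rho_0$ is simple; simplicity only serves to identify the selected residue with $\mathcal L_{\rho_0,X,\alpha}$.
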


The proof is given in Appendix~\ref{app:rh-localization}. The theorem gives an
RH-conditional local law, uniform for \(\lambda\) in compact subsets of
\(\C\setminus\{0\}\), for each fixed zero \(\rho_0\). Under pole damping,
the selected residue determines the full defect asymptotics.

Under RH, hypothesis~\textup{(i)} of
Theorem~\ref{thm:full-remainder-localization} is automatic: every non-trivial
zero satisfies \(x_{\rho'}=0\), so
\(Q_\alpha(\rho';\rho_0)=-\alpha^2 y_{\rho'}^2\le 0\), with strict inequality
for \(\rho'\neq\rho_0\). Theorem~\ref{thm:rh-full-local-profile} is therefore
an application of Theorem~\ref{thm:full-remainder-localization} in which the
finite-window damping is supplied for free by the vertical alignment of
zeros; its substantive content is the residue asymptotics of
Theorem~\ref{thm:central-profile} together with the pole-damping and
shifted-contour hypotheses.

\begin{remark}
The pole-damping condition Eq.~\eqref{eq:rh-pole-damping} is equivalent to
\begin{equation}
        \alpha>
        \frac{1}{\sqrt{2(\gamma_0^2-1/4)}}.
\end{equation}
Thus it is an explicit lower bound on the smoothing width for the fixed zero
\(\rho_0\). For the classical non-trivial zeros of \(\zeta\), this restriction
is mild for fixed moderate \(\alpha\), such as \(\alpha=1\). 
The condition imposes enough smoothing width to give exponential suppression
of the pole contribution. In particular, \(\alpha\ge1\) suffices whenever
\(\gamma_0^2>3/4\), while smaller values of \(\alpha\) require the displayed
lower bound.
\end{remark}

\section{Direct Prime-Side Numerical Check}
\label{sec:computational-diagnostics}

The results in this paper are analytic. We include a direct numerical
check of the prime-side defect in order to illustrate the selected-zero
local profile in a concrete computable regime.

Let
\begin{equation}
        \rho_0=\frac12+14.1347251417347\,i
\end{equation}
be the first non-trivial zero of \(\zeta\), and put
\begin{equation}
        s_\lambda=\rho_0+\frac{\lambda}{\log X}.
\end{equation}
For real positive \(\lambda\), the finite-\(X\) selected-zero prediction
from Theorem~\ref{thm:central-profile} becomes
\begin{equation}
        \mathcal L_{\rho_0,X,\alpha}(s_\lambda)
        =
        -\lambda\,
        \Ree\left(
        \frac{\exp(-\lambda+\alpha^2\lambda^2/\log X)}{\lambda}
        \right)
        =
        -\exp\left(
        -\lambda+\frac{\alpha^2\lambda^2}{\log X}
        \right).
\end{equation}
We compare this expression with the directly computed prime-side defect
\begin{equation}
        \Delta_{X,\alpha}(s_\lambda)
        =
        \left(\sigma-\frac12\right)
        \Ree\left(
        P_{X,\alpha}(s_\lambda)-\frac{\zeta'}{\zeta}(s_\lambda)
        \right),
\end{equation}
where \(P_{X,\alpha}\) is evaluated from the truncated prime-power sum
\begin{equation}
        P_{X,\alpha}^{(N)}(s)
        =
        -\sum_{\substack{n=p^k\\ n\le N}}
        \Lambda(n)n^{-s}
        \frac12
        \operatorname{erfc}
        \left(
        \frac{\log n-\log X}{2\alpha\sqrt{\log X}}
        \right).
\end{equation}
The truncation is numerical only; the analytic object remains the
infinite smoothed prime-power field.

Figure~\ref{fig:prime-side-first-zero-overlay} shows the comparison for
\begin{equation}
        X=10^4,\qquad \alpha=0.2,\qquad
        N=2{,}000{,}000,\qquad
        0.25\le \lambda\le 3.5 .
\end{equation}
The measured prime-side defect is visually indistinguishable from the
finite-\(X\) selected-zero profile. The limiting profile
\(-e^{-\lambda}\) is also shown for comparison; its slight displacement
indicates the size of the finite-\(X\) correction.

\begin{figure}[htbp]
\centering
\includegraphics[width=0.82\textwidth]{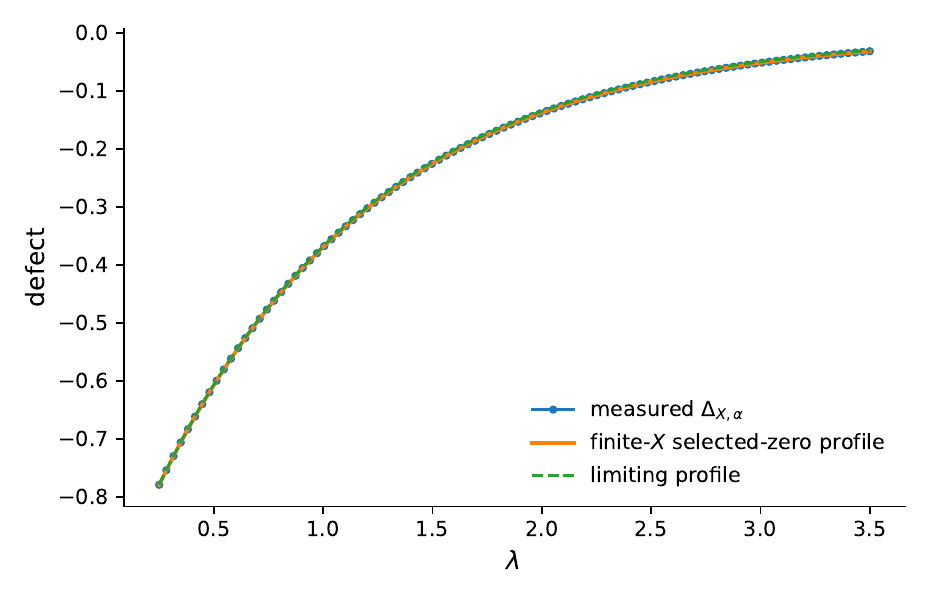}
\caption{Direct prime-side numerical check near the first non-trivial zero
\(\rho_0=1/2+14.1347251417347\,i\). The measured defect
\(\Delta_{X,\alpha}(s_\lambda)\), computed from the truncated smoothed
prime-power sum, is compared with the finite-\(X\) selected-zero prediction
\(\mathcal L_{\rho_0,X,\alpha}(s_\lambda)\). The limiting profile is shown
only as a reference. Here \(X=10^4\), \(\alpha=0.2\),
\(N=2{,}000{,}000\), and \(0.25\le\lambda\le3.5\).}
\label{fig:prime-side-first-zero-overlay}
\end{figure}

For this run, the maximum observed pointwise discrepancy is
\begin{equation}
        \max_{0.25\le\lambda\le3.5}
        \left|
        \Delta_{X,\alpha}(s_\lambda)
        -
        \mathcal L_{\rho_0,X,\alpha}(s_\lambda)
        \right|
        =
        3.40\times10^{-10}.
\end{equation}
Thus, in this controlled parameter regime, the direct prime-side
computation reproduces the finite-\(X\) selected-zero profile to very high
accuracy.

As a small robustness check, Table~\ref{tab:prime-side-robustness} reports
the same maximum discrepancy for several values of \(X\), \(\alpha\), and
the prime-power cutoff \(N\). The residual remains small throughout the
tested range.

\begin{table}[htbp]
\centering
\caption{Robustness check for the direct prime-side computation near
\(\rho_0=1/2+14.1347251417347\,i\). The final column reports
\(\max|\Delta_{X,\alpha}-\mathcal L_{\rho_0,X,\alpha}|\) over
\(0.25\le\lambda\le3.5\).}
\label{tab:prime-side-robustness}
\begin{tabular}{c c r c}
\hline
\(X\) & \(\alpha\) & \(N\) &
\(\max|\Delta_{X,\alpha}-\mathcal L_{\rho_0,X,\alpha}|\) \\
\hline
\(10^3\) & \(0.15\) & \(300{,}000\) & \(1.13\times10^{-6}\) \\
\(10^3\) & \(0.20\) & \(500{,}000\) & \(7.72\times10^{-9}\) \\
\(10^3\) & \(0.25\) & \(1{,}000{,}000\) & \(7.46\times10^{-12}\) \\
\(10^4\) & \(0.15\) & \(1{,}000{,}000\) & \(1.11\times10^{-7}\) \\
\(10^4\) & \(0.20\) & \(2{,}000{,}000\) & \(3.40\times10^{-10}\) \\
\(10^4\) & \(0.25\) & \(8{,}000{,}000\) & \(9.56\times10^{-10}\) \\
\hline
\end{tabular}
\end{table}

This computation is not used in the proof. Its purpose is to show that the
smoothed prime-power field gives a numerically accessible realization of
the local selected-zero profile, once the nonlocal remainder is small in
the tested window.

\section{Possible Extensions}

\subsection{Sharper zero-spacing hypotheses}

The finite-window margin \(\eta_{\alpha,M}(\rho_0)\) suggests a quantitative
zero-spacing problem adapted to the anisotropic Gaussian--Perron kernel.
Future refinements could control this margin uniformly for zeros up to height
\(T\), with explicit dependence on local gaps, or on average for almost all
zeros.

\subsection{\texorpdfstring{Dirichlet \(L\)-functions}{Dirichlet L-functions}}

Another direction is to adapt the construction to primitive Dirichlet
\(L\)-functions by replacing \(\zeta'/\zeta\) with \(L'/L(s,\chi)\). The
completed functional equation would then determine the relevant central line
and the analog of the horizontal force.

\subsection{Other kernels}

The Gaussian--Perron kernel is useful because it gives both a clean contour
shift and an explicit error-function prime weight. 
Other admissible kernels lead to different selected-zero profiles and damping
regions, making kernel stability a natural extension.

\section{Conclusion}

We introduced the Gaussian--Perron prime-force defect
\begin{equation}
        \Delta_{X,\alpha}(s)
        =
        \left(\sigma-\frac12\right)
        \Ree\left(P_{X,\alpha}(s)-\frac{\zeta'}{\zeta}(s)\right),
\end{equation}
where \(P_{X,\alpha}\) is an explicit smoothed prime-side field. A contour
shift gives the corresponding zero-side formula.

Near a simple zero \(\rho_0\), on the scale
\(s=\rho_0+\lambda/\log X\), the selected zero contributes
\begin{equation}
        -a\,
        \Ree\left(\frac{e^{-\lambda}}{\lambda}\right)
        +O\left(\frac1{\log X}\right)
\end{equation}
when \(\rho_0\) lies on the critical line. A hypothetical off-line zero gives
a selected-zero term with linear growth in \(\log X\). Under explicit
damping-separation, pole-damping, and shifted-contour regularity hypotheses,
the nonlocal remainder is exponentially small and the full defect has the
selected-zero profile. Under RH and the pole-damping condition, this becomes a
full local profile theorem for each fixed simple critical-line zero.

The result is a local Gaussian-smoothed prime-force defect theory in which
the logarithmic selected-zero scale and the anisotropic damping boundary work
together. The Gaussian--Perron kernel supplies both the prime-side cutoff and
the zero-side localization mechanism.

Figure~\ref{fig:prime-side-first-zero-overlay} gives a direct prime-side
numerical check of the selected-zero local profile. Appendix~\ref{app:selected-zero-profiles}
records an additional residue-level surface visualization of the selected-zero
profiles.

\appendix

\part*{Appendix}
\setcounter{section}{0}
\setcounter{equation}{0}
\setcounter{figure}{0}
\setcounter{table}{0}
\renewcommand{\theequation}{A\arabic{equation}}
\renewcommand{\thefigure}{A\arabic{figure}}
\renewcommand{\thetable}{A\arabic{table}}
\renewcommand{\thesection}{A\arabic{section}}
\renewcommand{\thesubsection}{\thesection.\arabic{subsection}}
\renewcommand{\theHequation}{app.eq.\arabic{equation}}
\renewcommand{\theHfigure}{app.fig.\arabic{figure}}
\renewcommand{\theHtable}{app.tab.\arabic{table}}
\renewcommand{\theHsection}{app.sec.\arabic{section}}
\renewcommand{\theHsubsection}{app.subsec.\arabic{section}.\arabic{subsection}}

\addcontentsline{toc}{section}{Appendix}
\label{app:Appendix}

\section{Proof of the Conditional Full Nonlocal Remainder Localization Theorem}
\label{app:conditional-localization}

This section proves Theorem~\ref{thm:full-remainder-localization}.
The proof follows the decomposition
\begin{equation}
        \Delta_{X,\alpha}(s)
        =
        \mathcal L_{\rho_0,X,\alpha}(s)
        +
        \mathcal E_{\rho_0,X,\alpha}(s),
\end{equation}
and estimates each component of the nonlocal remainder separately.

\subsection{Auxiliary zero-counting and Gaussian-tail estimates}

The proof uses two standard estimates for the large-height part of the
non-selected zero contribution.

\begin{lemma}[Zero-counting input]
\label{lem:zero-counting-input}
For \(T\ge2\) and \(U\ge1\), the number of non-trivial zeros with
\begin{equation}
        |\gamma'-T|\le U
\end{equation}
is
\begin{equation}
        O\bigl((U+1)\log(T+U+2)\bigr).
\end{equation}
In particular, for fixed \(U\), this number is \(O_U(\log(T+2))\). This
follows from the Riemann--von Mangoldt formula; see
\cite{Titchmarsh,MontgomeryVaughan}.
\end{lemma}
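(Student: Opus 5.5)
The plan is to derive the count from the Riemann--von Mangoldt formula
\begin{equation}
        N(T)=\frac{T}{2\pi}\log\frac{T}{2\pi}-\frac{T}{2\pi}+O(\log(T+2)),
\end{equation}
where \(N(T)\) counts non-trivial zeros with \(0<\gamma'\le T\), with multiplicity. The zeros with \(|\gamma'-T|\le U\) and \(\gamma'>0\) number \(N(T+U)-N(\max\{T-U,0\})\). When \(T>U\), this is the increment of \(N\) across an interval of length \(2U\). When \(T\le U\), the interval \([T-U,T+U]\) also reaches down to negative ordinates. Those zeros are handled by the symmetry \(\gamma'\mapsto-\gamma'\) (conjugation), which places them in \([0,U-T]\subset[0,U]\), so they are counted by \(N(U)\).

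The key step is a mean value bound for the main term \(f(t)=\frac{t}{2\pi}\log\frac{t}{2\pi}-\frac{t}{2\pi}\). Its derivative is \(f'(t)=\frac{1}{2\pi}\log\frac{t}{2\pi}\), which is \(\ll\log(t+2)\) for \(t\ge1\). This gives
\begin{equation}
        f(T+U)-f(T-U)\ll U\log(T+U+2).
\end{equation}
The two error terms contribute \(O(\log(T+U+2))\). Adding these yields \(O((U+1)\log(T+U+2))\).

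In the case \(T\le U\), the relevant range of \(\gamma'\) lies within \([0,2U]\). Here I would simply bound by \(N(2U)+N(U)\ll U\log(U+2)\), which is again of the stated form. Near \(t=0\) nothing delicate happens, because \(N(t)=0\) for \(t<14\) and all quantities are bounded there. The final claim, \(O_U(\log(T+2))\) for fixed \(U\), then follows from \(\log(T+U+2)\le\log(T+2)+\log(U+2)\ll_U\log(T+2)\), using \(T\ge2\).

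There is no real obstacle. The only points needing care are that the count includes multiplicity, which is consistent with how \(N(T)\) is defined, and the bookkeeping of the boundary case where the window crosses \(\gamma'=0\). For the latter I would apply the conjugation symmetry before invoking the formula.
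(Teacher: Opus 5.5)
Your proposal is correct and takes the same route as the paper, which simply invokes the Riemann--von Mangoldt formula; you fill in the routine details the paper leaves to the references (the mean-value bound on the main term, absorbing the error terms into \(O(\log(T+U+2))\), and the case \(T\le U\) via conjugation symmetry). One cosmetic point: when \(T-U<1\) the mean-value point can lie in \((0,1)\), outside the range \(t\ge1\) where you stated \(f'(t)\ll\log(t+2)\), but there \(f'(t)<0\), which only helps the upper bound.
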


\begin{lemma}[Large-height Gaussian tail]
\label{lem:large-height-tail}
Let \(\rho_0=1/2+i\gamma_0\), let \(K\Subset\C\setminus\{0\}\), and let
\(\alpha>0\). There exist constants \(M_0=M_0(\alpha,K)\),
\(Y_0=Y_0(\alpha,K)\), and \(c_{\rm tail}=c_{\rm tail}(\alpha,K)>0\) such
that, for all \(Y\ge Y_0\), all \(\lambda\in K\), and every non-trivial zero
\(\rho'\), the following holds. If
\begin{equation}
        \delta_{\rho'}=\rho'-\rho_0=x_{\rho'}+iy_{\rho'},
        \qquad
        |y_{\rho'}|\ge M_0,
\end{equation}
then
\begin{equation}
        Y\Ree\left(\delta_{\rho'}-\frac{\lambda}{Y}\right)
        +
        \alpha^2Y
        \Ree\left(
        \left(\delta_{\rho'}-\frac{\lambda}{Y}\right)^2
        \right)
        \le
        -c_{\rm tail}Yy_{\rho'}^2 .
\end{equation}
\end{lemma}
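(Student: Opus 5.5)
The plan is to expand the exponent exactly as in the proof of Proposition~\ref{prop:finite-window-certificate}, but now keeping track of the dependence on \(y_{\rho'}\) instead of treating \(\delta_{\rho'}\) as ranging over a finite set. Writing \(\delta=\delta_{\rho'}=x+iy\), the same algebra gives
\begin{equation}
        Y\Ree\left(\delta-\frac{\lambda}{Y}\right)
        +\alpha^2Y\Ree\left(\left(\delta-\frac{\lambda}{Y}\right)^2\right)
        =
        Y\bigl(x+\alpha^2x^2-\alpha^2y^2\bigr)
        -\Ree\lambda
        -2\alpha^2\Ree(\delta\lambda)
        +\frac{\alpha^2}{Y}\Ree(\lambda^2),
\end{equation}
where the first bracket is \(Q_\alpha(\rho';\rho_0)\) by Proposition~\ref{prop:damping-geometry}. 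The key observation is that the only unbounded quantity is \(y\). The horizontal displacement is bounded because every non-trivial zero lies in the open strip \(0<\beta'<1\) and \(\Ree\rho_0=1/2\), so \(|x|<1/2\).

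Next I bound each piece uniformly. Put \(R_K=\max_{\lambda\in K}|\lambda|\). Then:
\begin{itemize}
\item \(x+\alpha^2x^2\le \tfrac12+\tfrac{\alpha^2}{4}\);
\item \(|\Ree\lambda|\le R_K\);
\item \(|2\alpha^2\Ree(\delta\lambda)|\le 2\alpha^2R_K\bigl(\tfrac12+|y|\bigr)\);
\item \(\bigl|\alpha^2\Ree(\lambda^2)/Y\bigr|\le \alpha^2R_K^2\) for \(Y\ge Y_0:=1\).
\end{itemize}
Hence the exponent is at most
\begin{equation}
        -\alpha^2Yy^2+Y\left(\frac12+\frac{\alpha^2}{4}\right)+C_K\bigl(1+|y|\bigr),
\end{equation}
with \(C_K=R_K+\alpha^2R_K+\alpha^2R_K^2+2\alpha^2R_K\), say.

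Finally I take \(c_{\rm tail}=\alpha^2/2\) and split the remaining \(-\tfrac{\alpha^2}{2}Yy^2\) into two quarters. I choose \(M_0(\alpha,K)\) so large that for \(|y|\ge M_0\) both of the following hold:
\begin{equation}
        \frac{\alpha^2}{4}y^2\ge\frac12+\frac{\alpha^2}{4},
        \qquad
        \frac{\alpha^2}{4}y^2\ge C_K(1+|y|).
\end{equation}
Multiplying the first inequality by \(Y\) absorbs the \(Y(\tfrac12+\alpha^2/4)\) term. Since \(Y\ge1\), the second inequality gives \(\tfrac{\alpha^2}{4}Yy^2\ge C_K(1+|y|)\), which absorbs the \(\lambda\)-dependent terms. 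The total is then at most \(-\tfrac{\alpha^2}{2}Yy^2\), as required.

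There is no real obstacle here. The only point needing care is that the cross term \(-2\alpha^2\Ree(\delta\lambda)\) grows linearly in \(|y|\) and is not multiplied by \(Y\). It is nevertheless dominated by the quadratic term \(Yy^2\) once \(|y|\) is large, independently of \(Y\ge1\). This is why \(M_0\) depends only on \(\alpha\) and \(K\), and not on \(\rho_0\).
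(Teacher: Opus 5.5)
Your proposal is correct and follows essentially the same route as the paper. Both use the same exact expansion of the exponent, the bound $|x_{\rho'}|\le 1/2$ from the critical strip, and the choice $c_{\rm tail}=\alpha^2/2$, obtained by spending one quarter of $-\alpha^2Yy_{\rho'}^2$ on the $Y(\tfrac12+\tfrac{\alpha^2}{4})$ term and another quarter on the $\lambda$-dependent term $C_K(1+|y_{\rho'}|)$. The only cosmetic difference is that you fix $Y_0=1$ and let $M_0$ absorb that $\lambda$-dependent term, whereas the paper absorbs it by enlarging $Y_0$; either choice works.
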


\begin{proof}
Since \(\rho'\) lies in the critical strip and \(\rho_0=1/2+i\gamma_0\),
\(|x_{\rho'}|\le1/2\). Also \(|\lambda|\le C_K\) for \(\lambda\in K\).
Expanding gives
\begin{equation}
\begin{aligned}
&Y\Ree\left(\delta_{\rho'}-\frac{\lambda}{Y}\right)
+
\alpha^2Y
\Ree\left(
\left(\delta_{\rho'}-\frac{\lambda}{Y}\right)^2
\right)       \\
&
=Yx_{\rho'}-\Ree\lambda
+
\alpha^2Y(x_{\rho'}^2-y_{\rho'}^2)
-
2\alpha^2\Ree(\delta_{\rho'}\lambda)
+
\frac{\alpha^2}{Y}\Ree(\lambda^2)                         \\
&
\le
-\alpha^2Yy_{\rho'}^2
+
Y\left(\frac12+\frac{\alpha^2}{4}\right)
+
C_{\alpha,K}(1+|y_{\rho'}|).
\end{aligned}
\end{equation}
Choose \(M_0\ge1\) so that
\begin{equation}
        \frac12+\frac{\alpha^2}{4}
        \le
        \frac{\alpha^2}{4}y_{\rho'}^2
        \qquad(|y_{\rho'}|\ge M_0),
\end{equation}
and then choose \(Y_0\) so that, for \(Y\ge Y_0\),
\begin{equation}
        C_{\alpha,K}(1+|y_{\rho'}|)
        \le
        \frac{\alpha^2}{4}Yy_{\rho'}^2
        \qquad(|y_{\rho'}|\ge M_0).
\end{equation}
The preceding bound then gives
\begin{equation}
        Y\Ree\left(\delta_{\rho'}-\frac{\lambda}{Y}\right)
        +
        \alpha^2Y
        \Ree\left(
        \left(\delta_{\rho'}-\frac{\lambda}{Y}\right)^2
        \right)
        \le
        -\frac{\alpha^2}{2}Yy_{\rho'}^2.
\end{equation}
Thus the result holds with \(c_{\rm tail}=\alpha^2/2\), after increasing
\(M_0\) and \(Y_0\) if necessary.
\end{proof}

\subsection{Proof of the theorem}

\begin{proof}[Proof of Theorem~\ref{thm:full-remainder-localization}]
By Eq.~\eqref{eq:full-decomposition},
\begin{equation}
        \Delta_{X,\alpha}(s)
        =
        \mathcal L_{\rho_0,X,\alpha}(s)
        +
        \mathcal E_{\rho_0,X,\alpha}(s).
\end{equation}
The selected-zero term is given by Theorem~\ref{thm:central-profile}; it
remains to estimate the nonlocal remainder.

Since \(K\) is compact, for all sufficiently large \(Y\),
\begin{equation}
        -d<-\frac{\Ree\lambda}{Y}<c
\end{equation}
uniformly for \(\lambda\in K\). Hence \(\rho_0\) is crossed, and
\(\Ree(s+c)>1\). Thus Proposition~\ref{prop:explicit-remainder} applies.

Let \(\rho'=\beta'+i\gamma'\neq\rho_0\) be a crossed non-trivial zero and put
\begin{equation}
        \delta_{\rho'}=\rho'-\rho_0=x_{\rho'}+iy_{\rho'},
        \qquad
        y_{\rho'}=\gamma'-\gamma_0 .
\end{equation}
Since \(s=\rho_0+\lambda/Y\),
\begin{equation}
        \rho'-s=\delta_{\rho'}-\frac{\lambda}{Y}.
\end{equation}
The modulus of the exponential factor in the zero contribution is
\begin{equation}
        \left|
        X^{\rho'-s}\exp\!\left(\alpha^2Y(\rho'-s)^2\right)
        \right|
        =
        \exp\!\left(
        Y\Ree(\rho'-s)+\alpha^2Y\Ree\left((\rho'-s)^2\right)
        \right).
\end{equation}

First consider the crossed zeros with \(|y_{\rho'}|\le M\). They form a
finite set, and hypothesis \textup{(i)} gives a damping margin at least
\(\kappa\). Proposition~\ref{prop:finite-window-certificate} therefore gives
their total contribution as
\begin{equation}
        O_{K,M,\rho_0,\alpha}(e^{-c_1Y})
\end{equation}
for some \(c_1>0\).

Now consider the tail \(|y_{\rho'}|>M\). Since \(M\ge M_0(\alpha,K)\),
Lemma~\ref{lem:large-height-tail} gives
\begin{equation}
        Y\Ree(\rho'-s)+\alpha^2Y\Ree\left((\rho'-s)^2\right)
        \le
        -c_{\rm tail}Yy_{\rho'}^2 .
\end{equation}
After increasing \(M\) if necessary, assume \(M\ge1\). Since
\(|\lambda|\le C_K\) on \(K\), for all sufficiently large \(Y\),
\begin{equation}
\begin{aligned}
        |\rho'-s|
        &=
        \left|\delta_{\rho'}-\frac{\lambda}{Y}\right|        \\
        &\ge
        |y_{\rho'}|-\frac{C_K}{Y}                            \\
        &\ge
        \frac12|y_{\rho'}|
        \ge
        \frac14(1+|y_{\rho'}|).
\end{aligned}
\end{equation}
Thus each tail zero contributes at most
\begin{equation}
        \frac{C_K}{1+|y_{\rho'}|}
        \exp\left(-c_{\rm tail}Yy_{\rho'}^2\right).
\end{equation}
Using Lemma~\ref{lem:zero-counting-input} in unit intervals gives
\begin{equation}
        \sum_{m\ge M}
        (m+1)\log(\gamma_0+m+3)
        \exp(-c_{\rm tail}Ym^2)
        =
        O_{K,M,\rho_0}(e^{-c_3Y}),
\end{equation}
for some \(c_3>0\); for instance one may take
\(c_3=\frac12c_{\rm tail}M^2\) after absorbing polynomial and logarithmic
factors. Hence all crossed non-selected zeros contribute
\(O_K(e^{-c_{\rm zero}Y})\) for some \(c_{\rm zero}>0\).

The pole term at \(1\) is
\begin{equation}
        -
        \left(\sigma-\frac12\right)
        \Ree
        \frac{X^{1-s}\exp(\alpha^2Y(1-s)^2)}{1-s}.
\end{equation}
Its exponential factor has modulus
\begin{equation}
        \exp\left(
        Y\Ree(1-s)+\alpha^2Y\Ree\left((1-s)^2\right)
        \right).
\end{equation}
By Eq.~\eqref{eq:pole-damping-condition}, this is
\(O_K(e^{-\eta_{\rm pole}Y})\). Since
\(\sigma-1/2=O_K(1/Y)\) and \(1-s\) is bounded away from zero, the pole
contribution is \(O_K(e^{-c_{\rm pole}Y})\).

By hypothesis \textup{(iii)},
\begin{equation}
        d<2+\Ree\left(\rho_0+\frac{\lambda}{Y}\right)
        =
        2+\Ree s
\end{equation}
for all sufficiently large \(Y\), uniformly for \(\lambda\in K\). Hence no
trivial zeros are crossed, by Proposition~\ref{prop:trivial-zero}.

Finally consider the shifted contour term. Under hypothesis \textup{(iv)}, the
admissible-height limit defining \(R_{X,\alpha,d}(s)\) is dominated by the
vertical-line majorant below and may be estimated over \(\Ree z=-d\). On
\(z=-d+iv\),
\begin{equation}
        \left|X^z\exp(\alpha^2Yz^2)\right|
        =
        \exp\!\left(-Y(d-\alpha^2d^2)-\alpha^2Yv^2\right).
\end{equation}
Since \(d<1/\alpha^2\), \(d-\alpha^2d^2>0\). By the assumed
logarithmic-derivative bound on the shifted line,
\begin{equation}
        \frac{\zeta'}{\zeta}(s-d+iv)
        \ll_{d,K,\rho_0,A}
        \log^A(2+|v|+\gamma_0).
\end{equation}
Therefore
\begin{equation}
\begin{aligned}
        |R_{X,\alpha,d}(s)|
        &\le
        C_{d,K,\rho_0}
        e^{-Y(d-\alpha^2d^2)}
        \int_{-\infty}^{\infty}
        \log^A(2+|v|+\gamma_0)
        e^{-\alpha^2Yv^2}
        \frac{\dd v}{|-d+iv|}.
\end{aligned}
\end{equation}
Since \(d>0\), \(|-d+iv|\ge d\). Denoting the preceding integral by
\(J_Y\),
\begin{equation}
\begin{aligned}
        J_Y
        &\ll_{d,\gamma_0,A}
        \int_{-\infty}^{\infty}
        \log^A(2+|v|+\gamma_0)e^{-\alpha^2Yv^2}\,\dd v        \\
        &\ll_{d,\gamma_0,\alpha,A}
        Y^{-1/2}\log^A(3+\gamma_0).
\end{aligned}
\end{equation}
Therefore, since \(d-\alpha^2d^2>0\),
\begin{equation}
\begin{aligned}
        |R_{X,\alpha,d}(s)|
        &\ll_{d,K,\rho_0,\alpha,A}
        Y^{-1/2}e^{-Y(d-\alpha^2d^2)}        \\
        &=O_{d,K,\rho_0,\alpha}(e^{-c_RY})
\end{aligned}
\end{equation}
for some \(c_R>0\). The factor \(\sigma-1/2=O_K(1/Y)\) further improves the
bound.

Combining the zero, pole, trivial-zero, and shifted-contour estimates gives
Eq.~\eqref{eq:remainder-bound}. Theorem~\ref{thm:central-profile} then gives
Eq.~\eqref{eq:full-profile}.
\end{proof}

\section{Proof of the RH-Conditional Full Local Profile}
\label{app:rh-localization}

Throughout the appendix we assume RH and prove
Theorem~\ref{thm:rh-full-local-profile}. The zero
\(\rho_0=1/2+i\gamma_0\) is fixed and simple, and
\begin{equation}
        s=\rho_0+\frac{\lambda}{Y},
        \qquad
        \lambda=a+ib\in K,
        \qquad
        Y=\log X,
\end{equation}
where \(K\Subset\C\setminus\{0\}\). All constants depend at most 
on \(K,\alpha,d,\rho_0\) and are uniform in
\(X\) and \(\lambda\).

\begin{lemma}[Gaussian localization of the non-selected zeros under RH]
\label{lem:rh-zero-localization}
Assume RH, and let \(\rho_0=1/2+i\gamma_0\) be a simple zero. Then there are
constants \(C_K,c_K>0\) such that, uniformly for \(\lambda\in K\),
\begin{equation}
        \sum_{\rho'\neq\rho_0}
        \frac{
        m_{\rho'}
        \left|
        X^{\rho'-s}
        \exp\left(\alpha^2Y(\rho'-s)^2\right)
        \right|
        }{
        |\rho'-s|
        }
        \le C_K e^{-c_KY}.
        \label{eq:rh-zero-localization}
\end{equation}
\end{lemma}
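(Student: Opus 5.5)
Under RH every non-trivial zero has \(x_{\rho'}=\Ree(\rho'-\rho_0)=0\), so I split the sum in Eq.~\eqref{eq:rh-zero-localization} into a bounded window and a tail. The approach is the same as in the proof of Theorem~\ref{thm:full-remainder-localization}, but here the damping margin comes from RH rather than from a hypothesis.

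\emph{Step 1 (exponent).} For \(\rho'\neq\rho_0\), put \(\delta_{\rho'}=iy_{\rho'}\) with \(y_{\rho'}=\gamma'-\gamma_0\). The expansion used in Proposition~\ref{prop:finite-window-certificate} gives
\[
\log\bigl|X^{\rho'-s}\exp(\alpha^2Y(\rho'-s)^2)\bigr|
=-\alpha^2Yy_{\rho'}^2-\Ree\lambda+2\alpha^2y_{\rho'}\Imm\lambda+\frac{\alpha^2}{Y}\Ree(\lambda^2),
\]
since \(\Ree(\delta_{\rho'}\lambda)=-y_{\rho'}\Imm\lambda\).

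\emph{Step 2 (window).} Fix \(M=\max\{1,M_0(\alpha,K)\}\) from Lemma~\ref{lem:large-height-tail}. The zeros with \(|y_{\rho'}|\le M\) form a finite set. Because the ordinates of distinct non-trivial zeros are discrete and every zero lies on the critical line, there is \(\mu>0\) with \(|y_{\rho'}|\ge\mu\) for every \(\rho'\neq\rho_0\) in this window. Indeed, a zero with \(y_{\rho'}=0\) would equal \(\rho_0\) under RH. It follows that \(Q_\alpha(\rho';\rho_0)=-\alpha^2y_{\rho'}^2\le-\alpha^2\mu^2\), so Proposition~\ref{prop:finite-window-certificate} bounds the window contribution by \(O(e^{-\alpha^2\mu^2Y/2})\). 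It handles the multiplicities \(m_{\rho'}\) and the lower bound \(|\rho'-s|\ge\mu/2\) automatically.

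\emph{Step 3 (tail).} For \(|y_{\rho'}|>M\), Lemma~\ref{lem:large-height-tail} bounds the exponent by \(-c_{\rm tail}Yy_{\rho'}^2\), and \(|\rho'-s|\ge\frac14(1+|y_{\rho'}|)\) for large \(Y\). Multiplicities are absorbed into the zero count of Lemma~\ref{lem:zero-counting-input}, applied in unit intervals \(m\le|y|<m+1\). This leaves
\[
\sum_{m\ge M}\log(\gamma_0+m+3)\,e^{-c_{\rm tail}Ym^2}\ll e^{-c_{\rm tail}YM^2/2}
\]
for \(Y\ge1\), which is the same computation as in Appendix~\ref{app:conditional-localization}. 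Taking \(c_K=\min\{\alpha^2\mu^2/2,\,c_{\rm tail}M^2/2\}\) gives the claim.

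\emph{Main obstacle.} The only genuinely new point is the positivity of \(\mu\), i.e.\ that the finite window contains no zero other than \(\rho_0\) at vertical distance \(0\). This is exactly where RH is used: it makes \(x_{\rho'}=0\), so \(Q_\alpha\) depends only on \(y_{\rho'}\). The finiteness of the window then turns the strict negativity \(Q_\alpha<0\) into a uniform margin. Once that is in place, the rest consists of bookkeeping of the uniform \(O_K(1)\) terms in Steps 1–3.
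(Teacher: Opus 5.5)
Your proof is correct and follows essentially the same route as the paper. Both split the sum into a bounded window, where RH turns the positive nearest-neighbour ordinate gap into a uniform Gaussian margin, and a tail, handled by Gaussian decay in \(y_{\rho'}\) together with Riemann--von Mangoldt counting in unit intervals; the only difference is cosmetic, since the paper works directly with the RH exponent \(-a+\alpha^2a^2/Y-\alpha^2Y\left(y_{\rho'}-b/Y\right)^2\) (the same quantity as your Step~1) rather than citing Proposition~\ref{prop:finite-window-certificate} and Lemma~\ref{lem:large-height-tail}.
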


\begin{proof}
Under RH, every non-trivial zero is \(\rho'=1/2+i\gamma'\). Put
\(y_{\rho'}=\gamma'-\gamma_0\). Since
\begin{equation}
        \rho'-s
        =
        -\frac{a}{Y}
        +
        i\left(y_{\rho'}-\frac{b}{Y}\right),
\end{equation}
we have
\begin{equation}
\begin{aligned}
        \left|
        X^{\rho'-s}
        \exp\left(\alpha^2Y(\rho'-s)^2\right)
        \right|
        &=
        \exp\left(
        -a+\frac{\alpha^2a^2}{Y}
        -
        \alpha^2Y\left(y_{\rho'}-\frac{b}{Y}\right)^2
        \right)                                        \\
        &\le
        C_K
        \exp\left(
        -\alpha^2Y\left(y_{\rho'}-\frac{b}{Y}\right)^2
        \right).
\end{aligned}
        \label{eq:rh-zero-gaussian-bound}
\end{equation}

Since \(\rho_0\) is fixed and simple, and the zero set is discrete, there is a
nearest-neighbor gap
\begin{equation}
        g_0=\inf_{\rho'\neq\rho_0}|\gamma'-\gamma_0|>0.
\end{equation}
Hence, for all large \(Y\), uniformly for \(\lambda\in K\),
\begin{equation}
        \left|y_{\rho'}-\frac{b}{Y}\right|\ge\frac{g_0}{2}
        \qquad(\rho'\neq\rho_0).
\end{equation}
Thus the zeros in any fixed window \(|y_{\rho'}|\le M\) contribute
\(O_K(e^{-c_1Y})\): there are finitely many of them, and
\begin{equation}
        |\rho'-s|\ge |y_{\rho'}-b/Y|\ge g_0/2 .
\end{equation}

It remains to estimate the tail. Choose \(M\ge1\) so large that, for
\(|y_{\rho'}|\ge M\) and all large \(Y\),
\begin{equation}
        \left|y_{\rho'}-\frac{b}{Y}\right|\ge\frac{|y_{\rho'}|}{2}.
\end{equation}
Since \(|\lambda|\le C_K\) on \(K\), increasing \(Y\) if necessary gives
\begin{equation}
        |\rho'-s|
        =
        \left|iy_{\rho'}-\frac{\lambda}{Y}\right|
        \ge
        \frac12|y_{\rho'}|
        \ge
        \frac14(1+|y_{\rho'}|).
\end{equation}
Together with Eq.~\eqref{eq:rh-zero-gaussian-bound}, this gives
\begin{equation}
        \exp\left(
        -\alpha^2Y
        \left(y_{\rho'}-\frac{b}{Y}\right)^2
        \right)
        \le
        \exp\left(-\frac{\alpha^2}{4}Yy_{\rho'}^2\right).
\end{equation}
Using Lemma~\ref{lem:zero-counting-input} in unit intervals
\(m\le |y_{\rho'}|<m+1\), and absorbing finitely many low-ordinate intervals
into the constant, we obtain
\begin{equation}
\begin{aligned}
&\sum_{|y_{\rho'}|\ge M}
        \frac{
        m_{\rho'}
        \left|
        X^{\rho'-s}
        \exp\left(\alpha^2Y(\rho'-s)^2\right)
        \right|
        }{
        |\rho'-s|
        }                                               \\
&\qquad\ll_K
        \sum_{m\ge M}
        \log(\gamma_0+m+3)
        \exp\left(-\frac{\alpha^2}{4}Ym^2\right)
        =
        O_K(e^{-c_2Y})
\end{aligned}
\end{equation}
for some \(c_2>0\). Combining the bounded-window and tail bounds proves
Eq.~\eqref{eq:rh-zero-localization}.
\end{proof}

The next two lemmas control the pole, trivial-zero, and shifted-contour
terms in the proof of Theorem~\ref{thm:rh-full-local-profile}.

\begin{lemma}[Pole and trivial-zero terms under the RH-local hypotheses]
\label{lem:rh-pole-trivial}
Assume the hypotheses of Theorem~\ref{thm:rh-full-local-profile}. 
Then the pole contribution in Eq.~\eqref{eq:nonlocal-remainder} is \(O_K(e^{-cY})\),
and the trivial-zero contribution vanishes for all sufficiently large \(Y\).
\end{lemma}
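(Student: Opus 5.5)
The plan follows the proof of Theorem~\ref{thm:full-remainder-localization}: verify hypotheses (ii) and (iii) from the explicit assumptions of Theorem~\ref{thm:rh-full-local-profile}, then repeat the two short estimates for the pole and trivial-zero terms.

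\emph{Pole term.} Since \(s=1/2+i\gamma_0+\lambda/Y\), we have \(1-s=1/2-i\gamma_0-\lambda/Y\). Expanding gives
\begin{equation*}
        \Ree(1-s)+\alpha^2\Ree\bigl((1-s)^2\bigr)
        =
        \frac12+\alpha^2\Bigl(\frac14-\gamma_0^2\Bigr)+O_K(1/Y),
\end{equation*}
uniformly for \(\lambda\in K\), because \(|\lambda|\le C_K\) and \(\gamma_0\) is fixed. By Eq.~\eqref{eq:rh-pole-damping} the constant term is some \(-2\eta_{\rm pole}<0\). So for \(Y\ge Y_0(K,\alpha,\rho_0)\) the left side is at most \(-\eta_{\rm pole}\), which is Eq.~\eqref{eq:pole-damping-condition}. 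The pole residue then has modulus at most
\begin{equation*}
        \frac{e^{-\eta_{\rm pole}Y}}{|1-s|},
        \qquad
        |1-s|\ge|\Imm(1-s)|\ge\gamma_0/2
\end{equation*}
for large \(Y\), since \(\gamma_0>0\) and \(\Imm(1-s)=-\gamma_0-b/Y\). The prefactor satisfies \(|\sigma-1/2|=|a|/Y\le C_K/Y\), so the pole contribution in Eq.~\eqref{eq:nonlocal-remainder} is \(O_K(e^{-\eta_{\rm pole}Y})\). If the indicator \(\mathbf 1_{\{-d<\Ree(1-s)<c\}}\) vanishes the term is simply zero, so no case split is needed.

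\emph{Trivial zeros.} Here \(\Ree s=1/2+a/Y\ge 1/2-C_K/Y\), while \(d<1/2\) is fixed. Hence for \(Y\) large, uniformly in \(K\),
\begin{equation*}
        d<\frac12<2+\Ree s .
\end{equation*}
For every \(m\ge1\) this gives \(\Ree(-2m-s)\le-2-\Ree s<-d\), so no trivial zero is crossed and \(\mathcal T_{X,\alpha}(s)=0\), exactly as in Proposition~\ref{prop:trivial-zero}. This also verifies the last condition of hypothesis~(iii).

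I do not expect a real obstacle. The one point needing care is uniformity: every \(O_K(1/Y)\) perturbation, both in the damping exponent and in \(\Ree s\), must be absorbed for \(Y\ge Y_0\) depending only on \(K,\alpha,d,\rho_0\). This is immediate from compactness of \(K\).
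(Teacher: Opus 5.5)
Your proposal is correct and follows essentially the same route as the paper. You expand the damping exponent to $\frac12+\alpha^2(\frac14-\gamma_0^2)+O_K(1/Y)\le-\eta_{\rm pole}$ for large $Y$, combine this with $\sigma-\frac12=O_K(1/Y)$ and $1-s$ bounded away from zero, and use $d<\frac12<2+\Ree s$ to show that no trivial zero is crossed; your explicit bound $|1-s|\ge\gamma_0/2$ just makes precise the paper's remark that $1-s$ stays away from zero.
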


\begin{proof}
The pole-damping expansion used in Theorem~\ref{thm:full-remainder-localization}
gives, uniformly for \(\lambda\in K\),
\begin{equation}
        \Ree(1-s)
        +
        \alpha^2\Ree\left((1-s)^2\right)
        =
        \frac12+\alpha^2\left(\frac14-\gamma_0^2\right)
        +O_K\left(\frac1Y\right).
\end{equation}
By Eq.~\eqref{eq:rh-pole-damping}, the right-hand side is at most \(-\eta\)
for some \(\eta>0\) and all sufficiently large \(Y\). Since \(1-s\) stays
bounded away from zero and \(\sigma-1/2=a/Y=O_K(1/Y)\), the pole contribution
is \(O_K(e^{-cY})\).

For \(m\ge1\), the inequality \(0<d<1/2\) places each trivial-zero residue
to the left of the shifted contour for all sufficiently large \(Y\), uniformly
for \(\lambda\in K\). Hence the trivial-zero contribution vanishes.
\end{proof}

The following lemma is also used in the proof of
Theorem~\ref{thm:rh-full-local-profile}.

\begin{lemma}[Shifted-contour decay under RH]
\label{lem:rh-contour-decay}
Assume RH and choose
\begin{equation}
        0<d<\min\left\{\frac12,\frac1{\alpha^2}\right\}.
\end{equation}
Then
\begin{equation}
        R_{X,\alpha,d}(s)=O_K(e^{-cY})
        \label{eq:rh-contour-decay}
\end{equation}
uniformly for \(\lambda\in K\).
\end{lemma}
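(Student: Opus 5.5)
Under RH the work reduces to verifying hypothesis (iv) of Theorem~\ref{thm:full-remainder-localization} on the shifted line and then repeating the Gaussian estimate from Appendix~\ref{app:conditional-localization}. On \(z=-d+iv\) the point \(w=s+z\) has real part
\begin{equation}
        \Ree w=\frac12-d+\frac{a}{Y}.
\end{equation}
Since \(0<d<1/2\), for all large \(Y\) and uniformly in \(\lambda\in K\) this lies in a fixed compact interval \([\frac12-d-\epsilon,\frac12-d+\epsilon]\subset(0,\frac12)\), with \(\epsilon<\min\{d,\tfrac12-d\}/2\). Under RH every non-trivial zero has real part \(1/2\), and the trivial zeros and the pole are far away. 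Hence \(w\) is at horizontal distance at least \(d-\epsilon>0\) from every zero and pole of \(\zeta\), uniformly in \(v\). In particular the contour automatically avoids all poles of the integrand, except possibly \(z=0\), which is excluded because \(d>0\).

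Next I will apply Lemma~\ref{lem:log-derivative-away} with \(A=\frac12-d-\epsilon\), \(B=\frac12-d+\epsilon\). The separation \(d-\epsilon\) certainly exceeds \(c_0/\log(3+|t|)\) for a fixed \(c_0\), so
\begin{equation}
        \frac{\zeta'}{\zeta}(s-d+iv)\ll_{d,K}\log^2(3+|\gamma_0+v|+C_K)\ll_{d,K,\rho_0}\log^2(2+|v|+\gamma_0).
\end{equation}
This is hypothesis (iv) with \(A=2\). Because the integrand is now dominated by an integrable majorant on the whole line, the admissible-height limit \(R_{X,\alpha,d}(s)\) equals the absolutely convergent vertical integral over \(\Ree z=-d\). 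One could also get the bound from the functional equation \(\zeta'/\zeta(w)=-\zeta'/\zeta(1-w)+O(\log|t|)\) together with the RH-regime bound on the region \(\Ree(1-w)>1/2\) at fixed distance from the critical line, but Lemma~\ref{lem:log-derivative-away} suffices.

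The final step copies the shifted-contour estimate from the proof of Theorem~\ref{thm:full-remainder-localization}:
\begin{equation}
        |X^z\exp(\alpha^2Yz^2)|=e^{-Y(d-\alpha^2d^2)}e^{-\alpha^2Yv^2}, \qquad |z|\ge d.
\end{equation}
Integrating \(\log^2(2+|v|+\gamma_0)e^{-\alpha^2Yv^2}\) gives \(O(Y^{-1/2})\). Since \(d<1/\alpha^2\), the constant \(c=d-\alpha^2d^2\) is positive, and this yields Eq.~\eqref{eq:rh-contour-decay}.

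The main obstacle is the \emph{uniformity} of the logarithmic-derivative bound along the entire vertical line, which needs a fixed positive distance from all zeros for every height. RH is used exactly here: it places all non-trivial zeros on \(\Ree=1/2\), strictly separated from \(\Ree w\approx\frac12-d\). Without RH, zeros could approach the shifted line, and the bound would only be available at admissible heights.
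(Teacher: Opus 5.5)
Your proposal is correct and follows essentially the same route as the paper. Under RH the line \(\Ree w=\frac12-d+a/Y\) stays at a fixed positive distance from every zero and from the pole. Lemma~\ref{lem:log-derivative-away} then gives a \(\log^2\) bound uniformly in \(v\), and the Gaussian vertical-line estimate yields \(Y^{-1/2}e^{-Y(d-\alpha^2d^2)}\). You also state explicitly that the admissible-height limit defining \(R_{X,\alpha,d}(s)\) coincides with the absolutely convergent vertical integral, which the paper leaves implicit.
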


\begin{proof}
On \(z=-d+iv\),
\begin{equation}
        \left|
        X^z\exp(\alpha^2Yz^2)
        \right|
        =
        \exp\left(-Y(d-\alpha^2d^2)-\alpha^2Yv^2\right),
\end{equation}
and \(d-\alpha^2d^2>0\). For \(w=s+z\),
\begin{equation}
        \Ree w=\frac12-d+\frac{a}{Y}.
\end{equation}
Since \(K\) is compact and \(d<1/2\), these lines lie, for large \(Y\), in a
fixed compact substrip of \(0<\sigma<1/2\), separated from the critical line.
Under RH they are therefore separated from the non-trivial zeros, and they are
also separated from the pole at \(w=1\) and the trivial zeros.

The fixed separation from zeros and the pole allows
Lemma~\ref{lem:log-derivative-away} to be applied. Hence
\begin{equation}
        \frac{\zeta'}{\zeta}(w)
        \ll_{d,K,\rho_0}\log^2(3+|\Imm w|),
\end{equation}
uniformly for \(\lambda\in K\).

The same Gaussian vertical-line estimate used in the proof of
Theorem~\ref{thm:full-remainder-localization}, now with the above
logarithmic-derivative bound, gives
\begin{equation}
        |R_{X,\alpha,d}(s)|
        \ll_{d,K,\rho_0,\alpha}
        Y^{-1/2}e^{-Y(d-\alpha^2d^2)}.
\end{equation}
Hence
\begin{equation}
        |R_{X,\alpha,d}(s)|
        \ll_{d,K,\rho_0,\alpha}
        Y^{-1/2}e^{-Y(d-\alpha^2d^2)}
        =
        O_K(e^{-cY})
\end{equation}
for some \(c>0\). This proves Eq.~\eqref{eq:rh-contour-decay}.
\end{proof}

\begin{proof}[Proof of Theorem~\ref{thm:rh-full-local-profile}]
For large \(Y\), compactness of \(K\) gives
\begin{equation}
        -d<-\frac{\Ree\lambda}{Y}<c
\end{equation}
uniformly for \(\lambda\in K\). Under RH, every non-trivial zero therefore
satisfies the crossed-zero condition in Theorem~\ref{thm:explicit-defect},
and \(\rho_0\) is crossed. Since \(c>1/2\), also \(\Ree(s+c)>1\). Thus
Proposition~\ref{prop:explicit-remainder} applies.

By Proposition~\ref{prop:explicit-remainder}, the remainder consists of the
non-selected zero sum, the pole term, the trivial-zero term, and the shifted
contour term, each multiplied by \(\sigma-1/2\) after taking real parts.
Lemma~\ref{lem:rh-zero-localization}, Lemma~\ref{lem:rh-pole-trivial}, and
Lemma~\ref{lem:rh-contour-decay} give respectively
\begin{equation}
        \mathcal E_{\rho_0,X,\alpha}
        \left(\rho_0+\frac{\lambda}{Y}\right)
        =
        O_K(e^{-c_KY}),
\end{equation}
after decreasing \(c_K>0\) if necessary. This proves
Eq.~\eqref{eq:rh-remainder-bound}.

Finally, Theorem~\ref{thm:central-profile} gives, uniformly for
\(\lambda\in K\),
\begin{equation}
        \mathcal L_{\rho_0,X,\alpha}
        \left(\rho_0+\frac{\lambda}{Y}\right)
        =
        -a\,\Ree\left(\frac{e^{-\lambda}}{\lambda}\right)
        +
        O_K\left(\frac1Y\right).
\end{equation}
Combining this with the remainder estimate proves
Eq.~\eqref{eq:rh-full-profile}. Since \(Y=\log X\), the final displayed form
follows.
\end{proof}

\section{Additional Selected-Zero Profile Visualization}
\label{app:selected-zero-profiles}

This appendix records the residue-level selected-zero surface visualization
associated with Theorems~\ref{thm:central-profile}
and~\ref{thm:offline-spike}. The figure displays only the isolated
selected-zero contribution
\(\mathcal L_{\rho_0,X,\alpha}\), not the full defect
\(\Delta_{X,\alpha}\). The nonlocal remainder
\(\mathcal E_{\rho_0,X,\alpha}\) is omitted.

The displayed quantity is
\begin{equation}
        \mathcal L_{\rho_0,X,\alpha}
        \left(\rho_0+\frac{\lambda}{Y}\right),
        \qquad
        \lambda=a+ib,
        \qquad
        Y=\log X.
\end{equation}
For a critical-line zero, this profile is
\begin{equation}
        \mathcal L_{\rho_0,X,\alpha}
        \left(\rho_0+\frac{\lambda}{Y}\right)
        =
        -a\,
        \Ree\left(
        \frac{\exp(-\lambda+\alpha^2\lambda^2/Y)}{\lambda}
        \right).
\end{equation}
For a hypothetical off-line zero with \(\beta_0-1/2\neq0\),
Theorem~\ref{thm:offline-spike} predicts a selected-zero contribution whose
leading scale is proportional to \(Y=\log X\).

\begin{figure}[htbp]
\centering
\includegraphics[width=\textwidth]{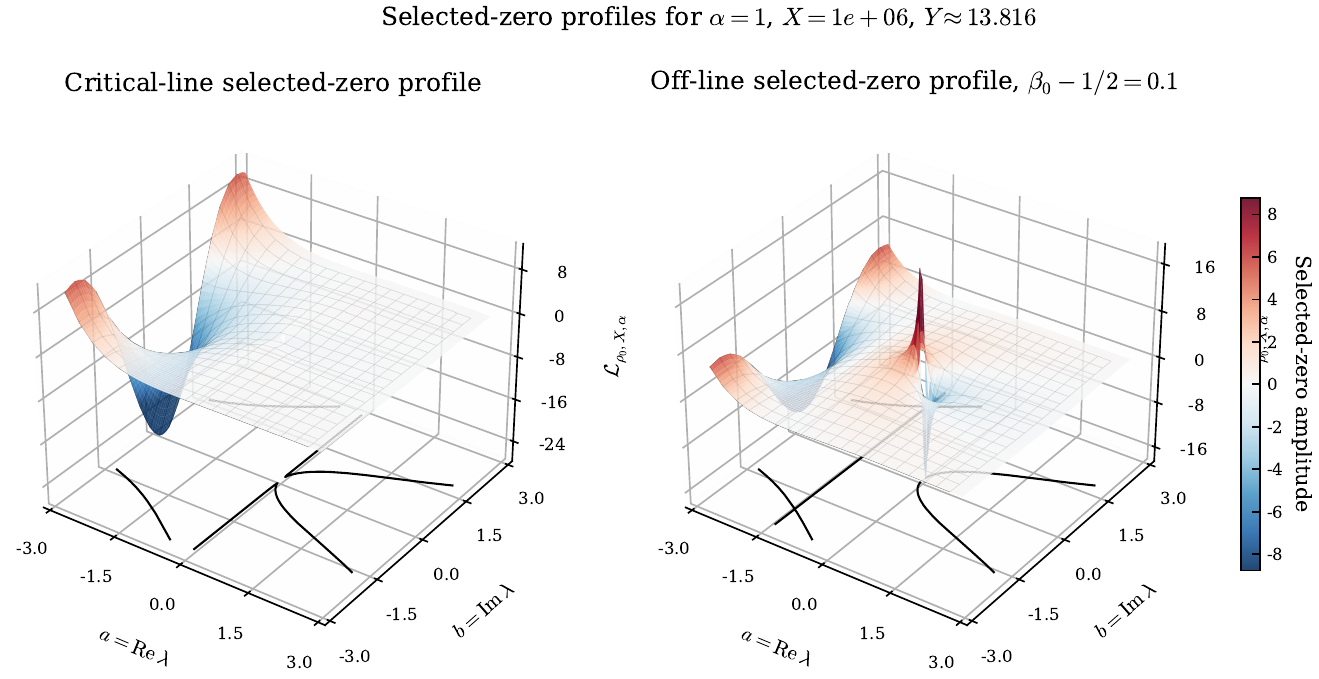}
\caption{Residue-level selected-zero profiles for the Gaussian--Perron
prime-force defect. The left panel shows the critical-line profile from
Theorem~\ref{thm:central-profile}, with \(\beta_0=1/2\). The right panel
shows the corresponding selected-zero profile for a hypothetical off-line
displacement \(\beta_0-1/2=0.10\), illustrating the spike mechanism in
Theorem~\ref{thm:offline-spike}. In both panels \(X=10^6\), \(Y=\log X\),
\(\alpha=1\), and \(\lambda=a+ib\). The disk around \(\lambda=0\) is masked
because the selected-zero term has a pole there. The displayed quantity is
\(\mathcal L_{\rho_0,X,\alpha}\); the full defect \(\Delta_{X,\alpha}\) is
obtained only after adding the nonlocal remainder
\(\mathcal E_{\rho_0,X,\alpha}\).}
\label{fig:selected-zero-profiles}
\end{figure}

\section*{Funding}
This research received no external funding.

\section*{Conflicts of Interest}
The author declares no conflicts of interest.

\section*{Data Availability}
No external datasets were used in this research. 
Key simulation scripts and post-processing code
for numerical validation are available from the corresponding author upon reasonable request.

\addcontentsline{toc}{section}{References}
\printbibliography

\end{document}